\gdef\n@te#1#2{\leavevmode\vadjust{%
 {\setbox\z@\hbox to\z@{\strut#1}%
  \setbox\z@\hbox{\raise\dp\strutbox\box\z@}\ht\z@=\z@\dp\z@=\z@%
  #2\box\z@}}}
\gdef\leftnote#1{\n@te{\hss#1\quad}{}}
\gdef\rightnote#1{\n@te{\quad\kern-\leftskip#1\hss}{\moveright\hsize}}
\gdef\?{\FN@\qumark}
\gdef\qumark{\ifx\next"\DN@"##1"{\leftnote{\rm##1}}\else
 \DN@{\leftnote{\rm??}}\fi{\rm??}\next@}}
\def\mnote#1{\leftnote{\vbox{\hsize=2.5truecm\footnotesize
\noindent #1}}}
\DeclareFontFamily{OT1}{wncyr}{\hyphenchar\font45 }
\DeclareFontShape{OT1}{wncyr}{m}{n}{%
   <5> <6> <7> <8> <9> gen * wncyr
   <10> <10.95> <12> <14.4> <17.28> <20.74>  <24.88>wncyr10}{}
\DeclareFontShape{OT1}{wncyr}{m}{it}{%
   <5> <6> <7> <8> <9> gen * wncyi
   <10> <10.95> <12> <14.4> <17.28> <20.74> <24.88> wncyi10}{}
\DeclareFontShape{OT1}{wncyr}{m}{sc}{%
   <5> <6> <7> <8> <9> <10> <10.95> <12> <14.4>
   <17.28> <20.74> <24.88>wncysc10}{}
\DeclareFontShape{OT1}{wncyr}{b}{n}{%
   <5> <6> <7> <8> <9> gen * wncyb
   <10> <10.95> <12> <14.4> <17.28> <20.74> <24.88>wncyb10}{}
\def\rus{\usefont{OT1}{wncyr}{m}{n}\cyracc\fontsize{9}{11pt}\selectfont}
\theoremstyle{plain}
\newtheorem{theorem}{Theorem}
\newtheorem{corollary}{Corollary}
\newtheorem{example}{Example}
\newtheorem*{exnonumber}{Example}
\theoremstyle{definition}
\newtheorem{definition}{Definition}
\theoremstyle{remark}
\newtheorem{example*}{Example}
\def\Cn{\;{\mathcal C}_n}
\def\An{{\bf A}^n}
\def\bAn{{\mathbf A}\!^n}
\def\bAnn{{\mathbf A}\!^{n-1}}
\def\bAm{{\mathbf A}\!^m}
\def\bAs{{\mathbf A}\!^s}
\def\bA2{{\mathbf A}\!^2}
\def\Ab3{{\mathbf A}\!^3}
\def\bA1{{\mathbf A}\!^1}
\def\Aut{{\rm Aut}_k}
\newcommand{\dss}{\hskip -2mm\rotatebox{68}{\raisebox{-1.8\height}{\mbox{\normalsize -\hskip .1mm-\hskip .1mm-}}}\hskip -.6mm}
\begin{document}

\title[Bass' triangulability problem]
{Bass' triangulability problem}

\author[Vladimir L. Popov]{Vladimir L. Popov}
\address{Steklov Mathematical Institute, Russian Academy of Sciences, Gubkina 8, Moscow 119991, Russia}

\address{National Research University\\ Higher School of Economics, Myas\-nitskaya
20, Moscow 101000,\;Russia}

\email{popovvl@mi.ras.ru}

\thanks{Partially
 supported by
 grants {\rus RFFI}
15-01-02158  and {\rus N{SH}}-2998.2014.1.}



\begin{abstract} Exploring Bass' Triangulability Problem on unipotent algebraic subgroups of the affine Cremona groups, we prove a triangulability criterion,
the existence of nontriangulable connected solvable affine algebraic subgroups of the Cremona groups, and
stable triangulability of such subgroups; in particular,
in the stable range we answer Bass' Triangulability Problem is the affirmative.\;To this end we prove a
theorem on invariant subfields of $1$-extensions.\;We also obtain a ge\-ne\-ral construction of all rationally triangulable subgroups of the Cremona groups and,
as an application, classify rationally triangulable connected one-dimensional uni\-potent affine algebraic subgroups  of the Cremona groups up to conjugacy.
\end{abstract}

\maketitle

\section{Introduction}

We assume given an
algebraically closed field $k$ of arbitrary characteristic which
serves
as
domain of definition
for each of the varieties considered below.\;In this paper, ``variety'' means ``algebraic variety'' and
 it is identified with its set
of
$k$-rational points.

Recall that the {\it Cremona group {\rm(}over $k${\rm)} of rank} $n$ is
the group
\begin{equation*}
{\mathcal C}_n:={\rm Aut}_k\,k(\bAn),
 \end{equation*}
and  ${\rm Aut}_k\,k[\bAn]$ is the {\it affine Cremona group {\rm(}over $k${\rm)} of rank} $n$.\;The group ${\rm Bir}\,\bAn$ of rational self-maps of $\bAn$ (resp.\,the group ${\rm Aut}\,\bAn$)
is identified with ${\mathcal C}_n$ (resp.\,${\rm Aut}_k\,k[\An]$) by means of
the isomorphism $\varphi\mapsto (\varphi^*)^{-1}$.
For $n>1$, we identify $k(\bAnn)$ with the subfield of $k(\bAn)$ by means of the natural embedding
$k(\bAnn) \hookrightarrow k(\bAn)$ determined by the projection
$
\bAn\to\bAnn$, $(\alpha_1,\ldots, \alpha_n)\mapsto (\alpha_1,\ldots, \alpha_{n-1})$.
This makes $k[\bAnn]$ the subalgebra of $k[\bAn]$.\;We put $k({\mathbf A}\!^0)=k[{\mathbf A}\!^0]:=k$.

 Let  
 $x_i\colon \bAn\to k$, $(\alpha_1,\ldots,\alpha_n)\mapsto \alpha_i$, be the $i$th standard  coordinate function.
 We have
 \begin{equation}\label{xi}
 k(\bAn)=k(x_1,\ldots, x_n),\quad k[\bAn]=k[x_1,\ldots, x_n].
 \end{equation}

 The group
$\mathcal C_{n-1}$
is identified with
the subgroup of $\mathcal C_{n}$
by means of the embedding
\begin{equation}\label{embC}
\mathcal C_{n-1}\hookrightarrow {\mathcal C}_{n},\;
\varphi\mapsto \widetilde \varphi,\;\mbox{where
$\widetilde \varphi (x_i):=\varphi(x_i)$ for $i<n$ and $\widetilde \varphi(x_{n}):=x_{n}$.}
\end{equation}
This makes ${\rm Aut}_k\,k[\bAnn]$ the subgroup of ${\rm Aut}_k\,k[\bAn]$.

Although the groups ${\mathcal C}_n$ and ${\rm Aut}_k\,k[\bAn]$
are infinite-dimensional for $n>1$ (see \cite{R64}, \cite{P14}), the analogies between them and algebraic groups catch the eye:\;they have the Zariski topology, algebraic subgroups, tori, roots, the Weyl groups, $\ldots$ (see \cite{P13}, \cite{P13-2} and references therein).
The de Jonqui\`eres subgroup
\begin{equation*}
{\mathcal J}_n:=\{\varphi\in {\rm Aut}_k\,k[\bAn]\mid \varphi(x_i)=\alpha_ix_i+h_i,\;\alpha_i\in k\!^\times, h_i
\in k[{\mathbf A}\!^{i-1}]
\}
\end{equation*}
is viewed by some authors as an
analog  of Borel subgroup for ${\rm Aut}_k\,k[\bAn]$ (see, e.g., \cite{B84}).\;Supporting this viewpoint, \cite[Thm.\,3.1]{P13} implies that every algebraic subgroup of ${\mathcal J}_n$ is affine solvable.\;Having in mind conjugacy of Borel subgroups in every finite dimensional affine algebraic group, one leads to the question
whether it is true that every connected solvable affine algebraic subgroup $G$ of   ${\rm Aut}_k\,k[\bAn]$ is conjugate in ${\rm Aut}_k\,k[\bAn]$ to a subgroup of ${\mathcal J}_n$.\;In particular, Problem III in \cite{B84} asks whether it is true  for unipotent $G$.\;In \cite{B84} Bass answered the latter question in the negative for ${\rm char}\,k=0$, $n=3$, and  $G=k^+$, the one-dimensional additive group.\;In \cite{P87} was then elaborated a simple general method yielding
negative answers
for ${\rm char}\,k=0$, $G=k^+$, and all $n>2$ (this method, in the form of usage of so called replicas, became
 the crucial instrument in the recent studies on infinite transitivity of automorphism groups of algebraic varieties \cite{Ka}).\;Given these developments, Bass formulated for ${\rm char}\, k=0$ the following

\vskip 2mm

\noindent{\bf Bass' Triangulability Problem} (\cite[Question 4]{B84}){\bf.} ``If a unipotent group $G$ acts on $\bAn$, can the action be rationally
triangularized, i.e., can we write $k(x_1,\ldots, x_n) = k( y_1,\ldots, y_n)$ so that each subfield
$k(y_1,\ldots, y_i)$ is $G$-invariant?''

\vskip 2mm

Here we explore this problem in the broader context of connected solvable affine algebraic groups $G$ and arbitrary ${\rm char}\,k$.\;To formulate our results we first introduce two definitions.

\begin{definition}[{{\rm $1$-extensions}}] If a field $K$ 
is a purely transcendental  extension of a field $L$ of the transcendence degree $1$,
then, for brevity, we say that
$K$ is a {\it $1$-extension} of $L$.
\end{definition}

\begin{definition}[{{\rm Rationally triangulable subgroups}}]\label{triang} A subgroup $G$ of $\Cn$ is called {\it rationally triangulable}
 if there is  a flag
\begin{equation}\label{K}
k(\bAn)=:K_n \supset K_{n-1}\supset\cdots \supset K_1\supset K_0:=k
\end{equation}
of $G$-stable subfields of $k(\bAn)$ such that $K_i/K_{i-1}$ is
a $1$-extension
for every $i>0$.\;A
rational action
of an algebraic group on $\bAn$
is called {\it rationally triangulable} if the image
of this group under the homomorphism to
${\mathcal C}_n$ determined by this action is rationally triangulable.
\end{definition}

Now we shall formulate our results.

We start with proving the following Theorem \ref{ext1}; it, in turn, yields Theo\-rem \ref{ext1g} that is 
heavily used in the proofs of our next results.

\begin{theorem}[{{\rm Invariant subfields  of
 $1$-extensions}}]\label{ext1} Let $Q$ be a finitely ge\-ne\-rated field extension of $k$ and let $P$ be a
$1$-extension of $\,Q$.\;Let $G$ be a one-dimensional connected solvable affine algebraic subgroup of $\Aut(P)$ such that $Q$ is $G$-stable.
\begin{enumerate}[\hskip 4.2mm\rm(i)]
\item   If $\;Q^G=Q$, then $P^G=Q^G$.
\item If $\;Q^G\varsubsetneq Q$, then $P^G$ is a $1$-extension of $Q^G$.
\end{enumerate}
\end{theorem}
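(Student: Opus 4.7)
My plan is to split the argument along the two cases of the theorem and to argue geometrically on an irreducible $k$-model $V$ of $Q$; fix such a $V$ with $k(V)=Q$ and identify $P=Q(t)$ with $k(V\times\mathbf{A}^1)$. The hypotheses make $G$ into a birational group action on $V\times\mathbf{A}^1$ that projects to a birational action on $V$, and we may assume both actions are regular on $G$-invariant open subsets.

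For (i), the hypothesis $Q^G=Q$ means $G$ fixes $Q$ pointwise, so $Q\subseteq P^G\subseteq P=Q(t)$. By L\"uroth's theorem $P^G=Q$ or $P^G=Q(u)$ for some $u$ transcendental over $Q$; in the latter case $[P:P^G]$ is finite, so the faithful inclusion $G\hookrightarrow\mathrm{Aut}(P)$ would land in the finite group $\mathrm{Aut}(P/P^G)$, which is impossible as $\dim G=1$. Hence $P^G=Q=Q^G$.

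For (ii), I first reduce to the case that $G$ acts faithfully on $Q$. Let $H:=\ker\bigl(G\to\mathrm{Aut}_k(Q)\bigr)$; since $Q^G\subsetneq Q$, $H$ is a proper subgroup of the one-dimensional connected group $G$, hence finite. Then $H$ acts faithfully on $P$ as a finite group of $Q$-algebra automorphisms of $Q(t)$, so by L\"uroth $P^H$ is a 1-extension of $Q$. Because $G$ is abelian, $H$ is normal, and $G/H$ is a one-dimensional connected algebraic subgroup of $\mathrm{Aut}(P^H)$ acting faithfully on $Q$, with $P^G=(P^H)^{G/H}$ and $Q^G=Q^{G/H}$; replacing $(G,P)$ by $(G/H,P^H)$ we may assume the action on $Q$ is faithful. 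Now $G\in\{\mathbf{G}_a,\mathbf{G}_m\}$ acts on $V$ with one-dimensional generic orbits and trivial generic stabilizer (a nontrivial generic stabilizer would be a finite subgroup fixing a dense open of $V$ and hence acting trivially on $Q$). A $G$-invariant open $V_0\subseteq V$ is thus a principal $G$-bundle over $W:=V_0/G$ with $k(W)=Q^G$; since $H^1(K,\mathbf{G}_a)=H^1(K,\mathbf{G}_m)=0$ for every field $K$ (Hilbert 90 and its additive analogue), this bundle is trivial at the generic point of $W$, yielding a $G$-equivariant birational equivalence $V\approx G\times W$ with $G$ acting by left translation on the first factor.

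Under this identification $V\times\mathbf{A}^1\approx G\times W\times\mathbf{A}^1$ with $G$-action $h\cdot(g,w,s)=(hg,w,\beta(h,g,w,s))$ for some rational $\beta$; setting $\widetilde{s}(g,w,s)$ equal to the $\mathbf{A}^1$-coordinate of $g^{-1}\cdot(g,w,s)$, the cocycle identity for $\beta$ shows that $\widetilde{s}$ is $G$-invariant and that $(g,w,s)\mapsto(g,w,\widetilde{s})$ is a $G$-equivariant birational isomorphism onto $G\times W\times\mathbf{A}^1_{\widetilde{s}}$ on which $G$ acts only by left translation on the first factor. Passing to $G$-invariants then gives $P^G=k(W)(\widetilde{s})=Q^G(\widetilde{s})$, a 1-extension of $Q^G$, as required. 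The main obstacle I anticipate is the rational trivialization of the principal $G$-bundle $V_0\to W$: one must confirm that over algebraically closed $k$ of arbitrary characteristic the only candidates for $G$ are $\mathbf{G}_a$ and $\mathbf{G}_m$, invoke the appropriate Hilbert 90 vanishing at the generic point of $W$, and then carry the resulting section through the cocycle identity for $\beta$ to extract the explicit $G$-invariant coordinate $\widetilde{s}$ that witnesses $P^G$ as a 1-extension of $Q^G$.
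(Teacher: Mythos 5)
Your case (i) argument is sound, and your reduction modulo the finite kernel $H$ in case (ii) is also fine. The genuine gap is in the two claims on which the rest of case (ii) hangs: that the action of $G\in\{\mathbf{G}_a,\mathbf{G}_m\}$ on $V$ has trivial generic stabilizer, and that consequently $V_0\to W$ is a generically trivial principal $G$-bundle, giving a $G$-equivariant birational splitting $V\approx G\times W$. Both claims fail for $\mathbf{G}_a$ in characteristic $p>0$, which the theorem explicitly covers. Concretely, let ${\rm char}\,k=p$, $V=\mathbf{A}^2$ with coordinates $(u,v)$, and let $G=k^+$ act by $t\cdot(u,v)=(u,\,v+t^p-u^{p-1}t)$; take $Q=k(u,v)$ and $P=Q(s)$ with $G$ acting trivially on $s$. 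This is a faithful regular action (so $H=\{e\}$ and your reduction changes nothing), with $Q^G=k(u)\varsubsetneq Q$, yet the stabilizer of a point $(u,v)$ is $\{t: t^p=u^{p-1}t\}=\mathbb{F}_p\cdot u$: nontrivial for $u\neq 0$ and \emph{varying with the point}. Your parenthetical justification collapses here, because no single nonidentity element fixes a dense open (the fixed locus of $t_0$ is the finite union of lines $u^{p-1}=t_0^{p-1}$); the argument that works for $\mathbf{G}_m$ (all finite subgroups are the $\mu_n$, defined over the prime field, with closed fixed loci) has no analogue for $\mathbf{G}_a$ in characteristic $p$, whose finite subgroups $\mathbb{F}_p\cdot a$ form a moving family. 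Moreover the splitting you need is genuinely false in this example: if $\psi(u,v+t^p-u^{p-1}t)=\psi(u,v)+t$ for a rational $\psi$, then replacing $t$ by $t+u$ (which effects the same translation of $v$) forces $u=0$, a contradiction. A further, separate problem in characteristic $p$ is that even when the set-theoretic stabilizers are trivial, the scheme-theoretic stabilizer can be infinitesimal and the orbit maps purely inseparable, so ``bijective on orbits'' does not yield a torsor to which Hilbert 90 or its additive analogue could be applied.

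Note how the paper's proof of (ii) is engineered precisely to avoid this: it never splits anything in the fiber direction and never needs a torsor structure. It applies Rosenlicht's cross-section theorem (valid for connected solvable groups in every characteristic) only to the base action on $\widetilde X$, obtaining a subvariety $S$ mapping openly to $\widetilde X/G$; it then forms $Z=\rho^{-1}(S)\cap Y_0\cong\mathbf{A}^1\times S$, proves by an orbit-dimension argument that $Z\to\widetilde Y/G$ is dominant, and so embeds $P^G$ into $k(Z)$, a $1$-extension of $k(S)\simeq Q^G$; the count ${\rm tr\,deg}_{Q^G}P^G=1$ and L\"uroth's theorem then force $P^G$ to be a $1$-extension of $Q^G$. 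Your straightening trick in Step D is a nice piece of bookkeeping, and your whole argument does go through in characteristic $0$ (where stabilizers in $\mathbf{G}_a$ are automatically trivial and generic bijectivity already implies birationality, with no cohomology needed), but as written it does not prove the theorem in arbitrary characteristic; to repair it you would have to replace the bundle-trivialization step by something like the paper's cross-section-plus-dominance argument.
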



\begin{theorem}[{{\rm Purity of invariant field extensions}}]\label{ext1g}  Let $Q$ be a finitely ge\-ne\-rated field extension of $k$ and let $P$ be a
$1$-extension of $\,Q$.\;Let $G$ be a connected solvable affine algebraic subgroup of $\Aut(P)$ such that $Q$ is $G$-stable.\;Then one of the following holds:
\begin{enumerate}[\hskip 4.2mm\rm(i)]
\item $P^G= Q^G$; 
\item $P^G$ is a $1$-extension of $Q^G$.
\end{enumerate}
\end{theorem}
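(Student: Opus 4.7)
The plan is to reduce Theorem \ref{ext1g} to its one-dimensional case, Theorem \ref{ext1}, by induction on $\dim G$, exploiting the structure theory of connected solvable affine algebraic groups.

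The base case $\dim G=0$ is immediate: such a $G$ is trivial (being connected), so $P^G=P$ is a $1$-extension of $Q=Q^G$ and alternative (ii) holds. For the inductive step, assume $\dim G\geq 1$. Since $G$ is connected solvable and affine algebraic, it contains a closed connected normal subgroup $H$ of codimension one---for instance, the preimage in $G$ of any codimension-one closed connected subgroup of the abelian quotient $G/[G,G]$. Apply the induction hypothesis to $H$ acting on $P$ with the $H$-stable subfield $Q$: either (a) $P^H=Q^H$, or (b) $P^H$ is a $1$-extension of $Q^H$.

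In case (a), taking further $G/H$-invariants on both sides gives $P^G=(P^H)^{G/H}=(Q^H)^{G/H}=Q^G$, so (i) holds. In case (b), I would apply Theorem \ref{ext1} to the induced action of $G/H$ on the $1$-extension $P^H\supset Q^H$. Three things must be verified: (1) $Q^H$ is finitely generated over $k$, which follows from Rosenlicht's theorem on rational quotients applied to $H$ acting rationally on an affine model of $Q$; (2) the image $\Gamma$ of $G/H$ in $\Aut(P^H)$ is a connected solvable affine algebraic subgroup of dimension at most one, as a quotient of the one-dimensional connected solvable group $G/H$; (3) if $\Gamma$ is trivial then $P^G=P^H$ and $Q^G=Q^H$, so (b) already delivers alternative (ii), while if $\Gamma$ is one-dimensional Theorem \ref{ext1} applies directly and its two alternatives translate verbatim into (i) and (ii) for $G$.

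The main obstacle I anticipate is point (2): recognizing $\Gamma$ as a bona fide algebraic subgroup of the infinite-dimensional group $\Aut(P^H)$ in the precise sense demanded by Theorem \ref{ext1}. This presumably relies on the framework for algebraic subgroups of the Cremona groups developed in \cite{P13}, \cite{P13-2}, together with the compatibility of algebraic-group quotients with rational actions on varieties. The finite generation of $Q^H$ in point (1) and the bookkeeping for the induction are routine by comparison.
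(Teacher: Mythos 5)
Your proposal is correct and follows essentially the same route as the paper: induction on $\dim G$, passage to a closed connected normal subgroup $N\subset G$ with $\dim G/N=1$ (the paper cites \cite[p.\,6--03, Cor.\,1]{Gro} for its existence), the identities $(P^N)^{G/N}=P^G$ and $(Q^N)^{G/N}=Q^G$, and then Theorem \ref{ext1} applied to the $G/N$-action on the $1$-extension $P^N\supset Q^N$ in case (b). The three verification points you flag (finite generation of $Q^N$, the image of $G/N$ being an algebraic subgroup of ${\rm Aut}_k(P^N)$ of dimension at most one, and the degenerate case of a trivial image) are left implicit in the paper's two-line treatment of this step, so your extra care is a refinement rather than a divergence.
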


Using  
Theorem \ref{ext1g}, we obtain the following triangulability criterion:

\begin{theorem}[{{\rm Triangulability criterion}}]\label{cr} The following properties of a  connected solvable affine algebraic subgroup $G$ of the Cremona group $\Cn$  are equivalent:
\begin{enumerate}[\hskip 4.2mm\rm(i)]
\item $k(\bAn)^G$ is purely transcendental over $k$;
\item $G$ is rationally triangulable.
\end{enumerate}
\end{theorem}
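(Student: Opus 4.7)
The plan is to prove the two directions of the criterion separately, with Theorem \ref{ext1g} serving as the principal tool.

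The implication (ii) $\Rightarrow$ (i) will be a direct iteration of Theorem \ref{ext1g} along the flag \eqref{K}. Inductively, each $K_i$ is finitely generated over $k$: $K_0=k$ trivially, and if $K_{i-1}$ is finitely generated, so is $K_i=K_{i-1}(u)$ for the single transcendental $u$ witnessing the 1-extension. Applying Theorem \ref{ext1g} to the pair $(K_{i-1},K_i)$, with $K_{i-1}$ the $G$-stable finitely generated subfield and $K_i$ a 1-extension of it, yields that $K_i^G$ is either equal to $K_{i-1}^G$ or a 1-extension of it. Starting from $K_0^G=k$ and inducting on $i$, every $K_i^G$ is purely transcendental over $k$; in particular $k(\bAn)^G=K_n^G$ is purely transcendental.

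For the implication (i) $\Rightarrow$ (ii) the task is constructive. Set $L:=k(\bAn)^G$ and $d:={\rm trdeg}_k L$. By hypothesis $L=k(t_1,\ldots,t_d)$ for some $t_1,\ldots,t_d$ algebraically independent over $k$, and since $G$ acts trivially on $L$, the chain
\[
k\subset k(t_1)\subset k(t_1,t_2)\subset\cdots\subset k(t_1,\ldots,t_d)=L
\]
is a $G$-stable flag with 1-extension steps, providing the lower portion of the desired flag. The remaining task is to extend this flag upward from $L$ to $k(\bAn)$ through $G$-stable subfields with 1-extension steps. The plan here is to exploit the fact that every connected solvable affine algebraic group is a special group in the sense of Serre: applied to the rational quotient $\bAn\dashrightarrow Y$ with $k(Y)=L$, specialness yields a $G$-equivariant birational identification of $\bAn$ with $Y\times G/H$, where $H\subseteq G$ is the generic stabilizer. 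Choosing a chain $H=M_0\triangleleft M_1\triangleleft\cdots\triangleleft M_{n-d}=G$ of closed subgroups adapted to a composition series of $G$ by closed connected normal subgroups with one-dimensional successive quotients, the subfields $L\cdot k(G/M_i)\subset k(\bAn)$ will give the upper portion of the flag, each step being a 1-extension because $M_i/M_{i-1}$ is one-dimensional and connected, hence $\mathbf{G}_a$ or $\mathbf{G}_m$, which are rational.

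The main obstacle is the trivialization step: producing the birational product decomposition $\bAn\dashrightarrow Y\times G/H$ and verifying that each stage of the resulting chain of subfields is indeed a 1-extension. A non-trivial generic stabilizer $H$ requires adapting the filtration of $G$ to a filtration by subgroups containing $H$, and compatibility of the resulting quotient fibrations $G/M_{i-1}\to G/M_i$ with the 1-extension property must be checked case by case. Once these details are settled, $G$-stability of every intermediate subfield and the concatenation of the upper and lower portions into the full flag \eqref{K} follow directly from the construction.
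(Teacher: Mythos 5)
Your argument for (ii)$\Rightarrow$(i) is correct and coincides with the paper's: pass to $G$-invariants along the flag \eqref{K} and apply Theorem \ref{ext1g} to each pair $(K_{i-1},K_i)$, noting finite generation. The gap is in (i)$\Rightarrow$(ii), and the trivialization step you yourself flag as ``the main obstacle'' is not a detail to be settled: it is false in general. Specialness of $G$ yields Zariski-local (hence birational) triviality of $G$-\emph{torsors}, so it applies only to generically free actions. If the generic stabilizer is nontrivial, the rational quotient $\bAn\dashrightarrow Y$ is not a torsor; worse, any $G$-equivariant birational identification $\bAn\approx Y\times G/H$ with $G$ acting through the second factor forces the stabilizers of all points in general position to be conjugate (the stabilizer of $(y,gH)$ is $gHg^{-1}$), and this conjugacy fails for solvable group actions. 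Concretely, let $G=(k^+)^2$ act on $\bAn$, $n\geqslant 2$, by $(s_1,s_2)\colon x_i\mapsto x_i$ for $i<n$ and $x_n\mapsto x_n+s_1+x_1s_2$. This is a faithful action of a connected abelian unipotent group, $k(\bAn)^G=k(x_1,\ldots,x_{n-1})$ is purely transcendental, yet the stabilizer of a point with $x_1=\alpha$ is the line $s_1+\alpha s_2=0$, which varies with $\alpha$; since $G$ is abelian, these stabilizers are pairwise non-conjugate, so no decomposition of the kind you posit exists --- although this $G$ satisfies (i) and is visibly rationally triangulable. Note that even the paper's own structure theorem (Theorem \ref{birspl}, i.e.\ \cite[Thm.\,3]{P15}) produces only a fibration whose fibers carry transitive $G$-actions that may twist from fiber to fiber (this is exactly Construction $\circledast$), not a product action. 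A secondary defect: for nontrivial non-normal $H$ your chain $H=M_0\triangleleft M_1\triangleleft\cdots\triangleleft G$ with one-dimensional quotients need not exist at all, e.g.\ a maximal torus of a two-dimensional Borel group is self-normalizing.

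The paper proves (i)$\Rightarrow$(ii) without touching orbits or stabilizers. It chooses a flag $G=G_0\supset G_1\supset\cdots\supset G_s=\{e\}$ of closed connected normal subgroups with $\dim G_{i-1}/G_i=1$ and sets $K_i:=k(\bAn)^{G_i}$; these subfields are $G$-stable precisely because each $G_i$ is normal in $G$. The key imported ingredient --- the one missing from your outline --- is Matsumura's theorem \cite[Thm.\,1]{Ma63} in characteristic $0$ and its generalization \cite[Thm.\,1]{P15} in general: if a one-dimensional connected solvable affine algebraic group acts nontrivially on a finitely generated field extension of $k$, then that field is a $1$-extension of its invariant subfield. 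Applied to the action of $G_{i-1}/G_i$ on $K_i$, whose invariant subfield is $K_{i-1}$, it shows that each step $K_i/K_{i-1}$ is either trivial or a $1$-extension; eliminating repetitions and concatenating with any transcendence flag of the purely transcendental field $k(\bAn)^G$ (automatically $G$-stable, since $G$ acts trivially there) produces the required flag \eqref{K}. That field-theoretic theorem does exactly the work you hoped to extract from specialness, and it is indifferent to how the stabilizers behave.
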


Theorem \ref{cr} generalizes
  \cite[Thm.\,3.1]{DF91}, where the claim is proved
  for one-dimensional unipotent algebraic subgroups of  ${\rm Aut}_k \bAn$ in the case ${\rm char}\,k=0$.

\begin{corollary}[{{\rm Low-dimensional quotients}}]\label{cor1}
A connected solvable affine algebraic subgroup $G$ of $\;\mathcal C_n$ is rationally triangulable in either of the following cases:
\begin{enumerate}[\hskip 4.2mm\rm(i)]
\item ${\rm tr\,deg}_kk(\bAn)^G\leqslant 1;$
\item ${\rm tr\,deg}_kk(\bAn)^G=2$ and ${\rm char}\,k=0;$
\item $G\subset {\rm Aut}\,\bAn$ and
\begin{enumerate}[\hskip -.9mm \rm (a)]
\item $\dim G\cdot x\geqslant n-1$ for some point $x\in \bAn$, or
\item ${\rm char}\,k=0$ and $\dim G\cdot x=n-2$ for some point $x\in \bAn$.
\end{enumerate}
\end{enumerate}
 \end{corollary}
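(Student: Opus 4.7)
The plan is to apply Theorem \ref{cr}: a connected solvable affine algebraic subgroup $G$ of $\mathcal{C}_n$ is rationally triangulable if and only if the invariant field $k(\bAn)^G$ is purely transcendental over $k$. So in each of cases (i), (ii), (iii) my task reduces to verifying this purity. The key input I will need is the classical principle that, over an algebraically closed field, subfields of $k(x_1,\ldots,x_n)$ of small transcendence degree correspond to unirational varieties which happen to be forced to be rational in low dimension.

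For (i), if $\mathrm{tr.deg}_k k(\bAn)^G = 0$ then $k(\bAn)^G$ is algebraic over $k$, and since $k$ is algebraically closed in the rational function field $k(\bAn)$, it must coincide with $k$, which is trivially purely transcendental. If $\mathrm{tr.deg}_k k(\bAn)^G = 1$, I invoke the following elementary but crucial fact: a subfield $L$ of $k(x_1,\ldots,x_n)$ of transcendence degree $1$ over an algebraically closed $k$ is purely transcendental. Indeed, letting $C$ be the smooth projective model of $L$, the inclusion $L\hookrightarrow k(\bAn)$ yields a dominant rational map $\bAn\dashrightarrow C$, so $C$ is unirational; a unirational curve over an algebraically closed field is rational, and so $L\cong k(t)$.

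For (ii), I invoke Castelnuovo's rationality criterion: in characteristic zero, any unirational surface over an algebraically closed field is rational. Exactly as in (i), the subfield $k(\bAn)^G$ of transcendence degree $2$ is the function field of a surface $S$ which admits a dominant rational map from $\bAn$, hence is unirational, hence rational; so $k(\bAn)^G$ is purely transcendental over $k$.

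For (iii), I reduce to (i) and (ii) by Rosenlicht's theorem on rational quotients: since $G\subset\mathrm{Aut}\,\bAn$ acts regularly (not merely birationally) on the irreducible variety $\bAn$, one has
\begin{equation*}
\mathrm{tr.deg}_k k(\bAn)^G \;=\; n - \max_{x\in\bAn}\dim G\cdot x.
\end{equation*}
Hypothesis (iii)(a) gives $\mathrm{tr.deg}_k k(\bAn)^G\leqslant 1$, placing us in case (i); hypothesis (iii)(b) gives $\mathrm{tr.deg}_k k(\bAn)^G=2$ with $\mathrm{char}\,k=0$, placing us in case (ii). There is no real obstacle here; the only point worth being careful about is to apply Rosenlicht in its regular-action form (so that the orbit-dimension formula is unconditional), and to cite the correct ``many-variable Lüroth'' statement in (i) rather than the classical one-variable Lüroth theorem, which by itself is too weak.
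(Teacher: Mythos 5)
Your proposal is correct and follows essentially the same route as the paper: Theorem \ref{cr} reduces everything to purity of $k(\bAn)^G$, which in cases (i) and (ii) is supplied by L\"uroth's theorem (in its form for transcendence degree one subfields of rational function fields) and Castelnuovo's theorem respectively, and case (iii) reduces to (i) and (ii) via Rosenlicht's orbit-dimension formula ${\rm tr\,deg}_kk(\bAn)^G=n-\max_{x\in \bAn}\dim G\cdot x$ from \cite[Thm.\,2]{Ro56}. Your extra care in distinguishing the many-variable L\"uroth statement from the one-variable one, and in spelling out the unirationality argument, only makes explicit what the paper leaves implicit.
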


\begin{corollary}[{{\rm $3$-dimensional affine space}}]\label{cor2}
If ${\rm char}\,k=0$, then every connected solvable affine algebraic subgroup of  ${\rm Aut}\,\Ab3$
is rationally triangulable.
\end{corollary}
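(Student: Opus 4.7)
The plan is to reduce to the triangulability criterion of Theorem~\ref{cr} and then dispatch every case through Corollary~\ref{cor1}. Let $G$ be a connected solvable affine algebraic subgroup of $\mathrm{Aut}\,\mathbf{A}^3$ and set $t := \mathrm{tr.deg}_k\,k(\mathbf{A}^3)^G$. Since $k(\mathbf{A}^3)$ has transcendence degree $3$ over $k$, only the four values $t\in\{0,1,2,3\}$ are possible, and by Theorem~\ref{cr} it suffices to prove in each case that $k(\mathbf{A}^3)^G$ is purely transcendental over $k$.

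For $t\le 1$ this is given by Corollary~\ref{cor1}(i), and for $t=2$ by Corollary~\ref{cor1}(ii); the hypothesis $\mathrm{char}\,k=0$ enters exactly here. It remains to rule out $t=3$ nontrivially. I would use Rosenlicht's theorem: for a connected algebraic group acting on a variety $X$ in characteristic zero, the generic orbit has dimension $\dim X - \mathrm{tr.deg}_k\,k(X)^G$. Applied to $X=\mathbf{A}^3$ with $t=3$, this forces generic $G$-orbits to be zero-dimensional; since $G$ is connected, its orbits are irreducible, hence generic orbits are single points. Consequently there is a dense open $U\subset\mathbf{A}^3$ such that every $x\in U$ is fixed by every $g\in G$. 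For each $g\in G$ the fixed-point locus $\{x\in\mathbf{A}^3: g\cdot x=x\}$ is closed and contains the dense set $U$, so equals $\mathbf{A}^3$. Because $G$ is realized as a subgroup of $\mathrm{Aut}\,\mathbf{A}^3$, this forces $g=e$, hence $G=\{e\}$ and $k(\mathbf{A}^3)^G=k(x_1,x_2,x_3)$ is manifestly purely transcendental.

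There is essentially no obstacle in this argument, since all the substantive work has been packed into Theorem~\ref{cr} and Corollary~\ref{cor1}; the sole additional observation is that in the ambient dimension $n=3$ the four a priori possible values of $t$ split into the three cases already covered by Corollary~\ref{cor1}(i)--(ii) plus the degenerate case $t=3$, which collapses $G$ to the trivial group via the Rosenlicht orbit-dimension count. One could alternatively appeal to Corollary~\ref{cor1}(iii), splitting on the generic orbit dimension of $G$ on $\mathbf{A}^3$: dimension $\ge 2$ is part (iii)(a), dimension $1$ is part (iii)(b), and dimension $0$ again forces $G=\{e\}$ by the same fixed-point argument.
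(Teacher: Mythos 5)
Your proposal is correct and follows essentially the same route as the paper, whose entire proof of this corollary is the single line ``This follows from Corollary \ref{cor1}'': you split on $t={\rm tr\,deg}_k k({\mathbf A}\!^3)^G$ and invoke Corollary \ref{cor1}(i) for $t\leqslant 1$ and \ref{cor1}(ii) for $t=2$. Your explicit treatment of the degenerate case $t=3$ (where Rosenlicht's orbit-dimension formula forces $G=\{e\}$ because $G$ sits faithfully inside ${\rm Aut}\,{\mathbf A}\!^3$) is a small completeness point that the paper leaves implicit, and it is handled correctly.
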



Corollary \ref{cor2} generalizes
  \cite[Cor.\,3.2]{DF91}, where the claim is proved for one-dimensional unipotent algebraic subgroups of
  ${\rm Aut}\,\Ab3$ and ${\rm char}\,k=0$.

\begin{corollary}[{{\rm Tori}}]\label{corto}
 The following properties of an affine algebraic torus $T$ in the Cremona group $\Cn$  are equivalent:
\begin{enumerate}[\hskip 4.2mm\rm(i)]
\item $T$ is rationally triangulable,
\item $T$ is linearizable, i.e., conjugate in $\,{\mathcal C}_n$ to a subgroup of $\,{\rm GL}_n$.
\item $T$ is conjugate in $\,{\mathcal C}_n$ to the diagonal torus of $\,{\rm GL}_d$, where $d=\dim T$.
\end{enumerate}
\end{corollary}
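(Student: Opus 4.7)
My plan is to prove the cycle (iii) $\Rightarrow$ (ii) $\Rightarrow$ (i) $\Rightarrow$ (iii). The implication (iii) $\Rightarrow$ (ii) is immediate from the block embedding $\mathrm{GL}_d \hookrightarrow \mathrm{GL}_n$. For (ii) $\Rightarrow$ (i), after conjugating in $\;{\mathcal C}_n$ one may assume $T \subset \mathrm{GL}_n$, and since every algebraic torus in $\mathrm{GL}_n$ is conjugate by an element of $\mathrm{GL}_n$ into the diagonal torus $D_n$, one further reduces to $T \subset D_n$; then each coordinate $x_i$ is a $T$-semi-invariant, so the standard flag $k \subset k(x_1) \subset \cdots \subset k(\bAn)$ is $T$-stable and witnesses the rational triangulability of $T$.

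The substance of the corollary is (i) $\Rightarrow$ (iii). Assume $T$ is rationally triangulable. By Theorem \ref{cr}, the invariant field $L := k(\bAn)^T$ is purely transcendental over $k$. Since $T \subset \;{\mathcal C}_n$ acts faithfully on $k(\bAn)$, its generic stabilizer on $\bAn$ is trivial, the general $T$-orbit has dimension $d := \dim T$, and ${\rm tr\,deg}_k L = n - d$; write $L = k(y_1, \ldots, y_{n-d})$ for algebraically independent $y_j$. The rational quotient $\pi \colon \bAn \dashrightarrow \mathrm{Spec}\,L$ is generically a principal $T$-bundle: the orbit map $T \times \bAn \dashrightarrow \bAn \times_L \bAn$, $(t,x) \mapsto (x, tx)$, is generically injective by the triviality of stabilizers and bijective on a dense open by equality of dimensions, hence birational.

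The generic fiber of $\pi$ is consequently a torsor under $T_L := T \times_k \mathrm{Spec}\,L$ over $\mathrm{Spec}\,L$, and since $T$ is a split torus (as $k$ is algebraically closed) Hilbert 90 yields $H^1(L, T) = H^1(L, \mathbb{G}_m)^d = 0$, so this torsor is trivial. A trivialization produces $t_1, \ldots, t_d \in k(\bAn)$, algebraically independent over $L$, which are $T$-semi-invariants whose weights form a basis of $X^*(T)$, together with $k(\bAn) = L(t_1, \ldots, t_d)$. In the new coordinates $(y_1, \ldots, y_{n-d}, t_1, \ldots, t_d)$ on $\bAn$, the torus $T$ fixes each $y_j$ and acts diagonally on the $t_i$, so $T$ is conjugate in $\;{\mathcal C}_n$ to the diagonal torus of $\mathrm{GL}_d$. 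The main obstacle is the Hilbert 90 / trivialization step: one must extract explicit semi-invariants $t_i$ from the abstract torsor isomorphism and verify that their characters span the full lattice $X^*(T)$, which amounts to matching the standard characters of $T_L$ on $k[T_L] = L[t_1^{\pm 1}, \ldots, t_d^{\pm 1}]$ with the $T$-action on the $t_i$.
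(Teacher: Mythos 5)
Your proposal takes a genuinely different route from the paper: the paper disposes of the corollary in three lines by citing Theorem \ref{cr} together with \cite[Thm.\,2.4]{P13} for (i)$\Leftrightarrow$(ii) and \cite[Cor.\,4]{P13-2} for (ii)$\Leftrightarrow$(iii}, whereas you re-derive the content of those citations directly via a torsor/Hilbert~90 argument. Your cycle (iii)$\Rightarrow$(ii)$\Rightarrow$(i) is correct and elementary (conjugating into the diagonal torus $D_n$ makes the standard coordinate flag $T$-stable), and in characteristic zero your (i)$\Rightarrow$(iii) is essentially the standard argument, modulo one unproved assertion: ``faithful $\Rightarrow$ trivial generic stabilizer'' is not a formality (it fails for general groups, e.g.\ ${\rm PGL}_n$ acting on ${\mathbf P}^{n-1}$); for tori it holds, but needs the weight argument --- choose semi-invariants $f_1,\dots,f_m\in k(\bAn)$ whose weights generate $X^*(T)$, and observe that stabilizers are trivial on the open set where all $f_i\neq 0$.

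The genuine gap is characteristic $p$, which the corollary covers (the paper even remarks that the char-$0$ hypotheses of the cited references are not used in their proofs). Two steps of yours break there. First, faithfulness of $T$ on $k$-points only forces the \emph{scheme-theoretic} generic stabilizer to be infinitesimal, not trivial: for $t\cdot x=t^px$ the abstract action of $k^\times$ is faithful and set-theoretically free, yet the stabilizer group scheme is $\mu_p$. Second, and consequently, your inference ``generically injective and bijective on a dense open, hence birational'' for the map $T\times\bAn\dashrightarrow\bAn\times_L\bAn$ is a characteristic-zero argument: bijective morphisms can be purely inseparable (Frobenius), and exactly that happens in the example above, so the generic fiber is \emph{not} a $T_L$-torsor and Hilbert~90 does not apply as stated. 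The gap is repairable, because the corollary concerns $T$ as a \emph{subgroup} of $\Cn$: the weights of the action generate a sublattice $\Lambda\subseteq X^*(T)$ of finite $p$-power index, the action homomorphism factors through the quotient torus $T'=T/S$ (where $S$ is the infinitesimal diagonalizable subgroup killed by $\Lambda$), the isogeny $T\to T'$ is bijective on $k$-points so the image subgroup of $\Cn$ is unchanged, and $T'$ does act scheme-theoretically generically freely, so your torsor argument applies to it. But as written, without this reparametrization (or an appeal to the cited results of \cite{P13}, \cite{P13-2}), your proof of (i)$\Rightarrow$(iii) only establishes the characteristic-zero case.
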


Next we show that the nontriangulable connected solvable affine algebraic subgroups of ${\mathcal C}_n$
do exist.\;In particular, the following theorem implies that
in case (ii) of Corollary \ref{cor1} it is not possible to replace $2$ by a bigger integer.

\begin{theorem}[{{Nontriangulable subgroups}}]\label{Ns} Let $n$ be an integer $\geqslant 5$ and let ${\rm char}\,k\neq 2$.\;Every $(n-3)$-dimensional connected solvable
affine algebraic group $G$ is isomorphic to a rationally nontriangulable algebraic subgroup of the Cremona group $\Cn$.
\end{theorem}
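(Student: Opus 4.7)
My plan is to apply the triangulability criterion (Theorem~\ref{cr}): it suffices to construct, for each connected solvable affine algebraic group $G$ of dimension $n-3$, a faithful rational algebraic action of $G$ on a rational $n$-dimensional variety $V$ for which $k(V)^G$ fails to be purely transcendental over $k$. Any birational identification $V\sim\mathbf{A}^n$ then transports this action to an embedding $G\hookrightarrow\mathcal{C}_n$, and Theorem~\ref{cr} certifies non-triangulability of the image.

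The construction I propose runs as follows. Suppose we have at our disposal a smooth projective $3$-fold $X$ over $k$ which is non-rational but whose product $X\times\mathbf{A}^{n-3}$ is rational; that is, $X$ is stably rational of stability index at most $n-3$, while $k(X)$ is not purely transcendental over $k$. Since every connected solvable affine algebraic $k$-group is a rational variety, $G$ is birationally isomorphic to $\mathbf{A}^{n-3}$. Set $V:=X\times G$, with $G$ acting trivially on the first factor and by right-translation on itself. Then $V$ is birationally isomorphic to $X\times\mathbf{A}^{n-3}$ and hence rational; the $G$-action on $V$ is free and therefore faithful; and
\[
k(V)^G=k(X),
\]
which is not purely transcendental over $k$ by the choice of $X$. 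Composing with a birational equivalence $V\sim\mathbf{A}^n$ yields the sought embedding of $G$ as a rationally non-triangulable algebraic subgroup of $\mathcal{C}_n$.

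The main obstacle is exhibiting the $3$-fold $X$: the argument depends on having a smooth projective non-rational $3$-fold over $k$ whose product with $\mathbf{A}^{n-3}$ is already rational. The hypothesis $\mathrm{char}\,k\neq 2$ strongly suggests the intended source is the conic-bundle construction of Beauville, Colliot-Th\'el\`ene, Sansuc and Swinnerton-Dyer, which produces smooth conic bundles $X\to\mathbf{P}^2$ (defined in characteristic $\neq 2$) that are stably rational---for instance with $X\times\mathbf{P}^3$ rational---but genuinely non-rational, the latter detected by an unramified Brauer-group invariant. This handles all sufficiently large $n$; for the boundary case $n=5$, which demands stability index as small as $2$, one may have to refine the conic-bundle construction or appeal to an alternative family of explicit non-rational stably rational $3$-folds. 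Verifying non-rationality together with the required stable rationality for an explicit family is the principal technical burden; once it is in place, the remainder of the argument is formal from Theorem~\ref{cr}.
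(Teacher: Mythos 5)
Your construction is essentially the paper's own: identify $G$ birationally with $\mathbf{A}^{n-3}$ (using rationality of connected solvable affine groups), let $G$ act on $V=X\times G$ by translations on the second factor and trivially on the first, observe that $k(V)^G\simeq k(X)$, transport the action to $\mathbf{A}^n$ through a birational isomorphism $V\dashrightarrow\mathbf{A}^n$, and invoke Theorem~\ref{cr}. The one place where your writeup stops short of a proof is the one you yourself flag: the existence of a nonrational threefold $X$ with $X\times\mathbf{A}^{n-3}$ rational, and in particular the boundary case $n=5$, which requires stability index $2$. Since the theorem is asserted for every $n\geqslant 5$, and your appeal to the Beauville--Colliot-Th\'el\`ene--Sansuc--Swinnerton-Dyer examples as you state it (with $X\times\mathbf{P}^3$ rational) only covers $n\geqslant 6$, this is a genuine gap as written.

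The paper closes that gap with a single citation: Shepherd-Barron, \emph{Stably rational irrational varieties} \cite{S-B04}, which proves (assuming ${\rm char}\,k\neq 2$) that there is a nonrational threefold $X$ such that $\mathbf{A}^2\times X$ is birationally isomorphic to $\mathbf{A}^5$ --- exactly the refinement of the conic-bundle examples you suspected was needed, namely that $X\times\mathbf{P}^2$ (not merely $X\times\mathbf{P}^3$) is already rational. With this one threefold fixed, all $n\geqslant 5$ are handled simultaneously, since $\mathbf{A}^{n-3}\times X=\mathbf{A}^{n-5}\times(\mathbf{A}^2\times X)$ is birational to $\mathbf{A}^{n-5}\times\mathbf{A}^5=\mathbf{A}^n$; there is no need to treat large $n$ and $n=5$ by separate arguments, and this citation is also the sole source of the hypothesis ${\rm char}\,k\neq 2$. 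Everything else in your argument --- freeness, hence faithfulness, of the translation action, the identification $k(V)^G=k(X)$ (which the paper justifies via \cite[Cor.\ of Prop.\ II.6.6]{Bor}), and the final appeal to Theorem~\ref{cr} --- coincides with the paper's proof.
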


As far as we do not claim that the subgroup in the formulation
of Theorem \ref{Ns} lies in ${\rm Aut}_k\,k[\bAn]$,
this theorem does not furnish the negative answer to Bass' Triangulability Problem.\;However, its proof  demonstrates the intimate interrelation between triangulability and Zariski's Cancellation Problem:
it shows that if there is a {\it nonrational} variety $Z$ such that $\bAn$ is isomorphic to ${\mathbf A}\!^{s}\times Z$, then the answer to Bass' Triangulability Problem is negative
(in view of this it is worth to recall that in positive characteristic Zariski's Cancellation Problem
is solved in the negative in \cite{Gu}); by Theorems \ref{birspl},
\ref{struct}, described in Section \ref{constr}, the converse it true at the  birational level.

On the other hand, in the stable range we do answer Bass' Triangulability
Problem is the affirmative.\;Namely, the following theorem shows that despite the existence
of rationally nontriangulable connected solvable affine algebraic subgroups of $\,{\mathcal C}_n$, every such subgroup is
stably rationally triangulable.\;More precisely, the following holds true.

\begin{theorem}[{{Stable triangulability}}]\label{st} Every connected solvable affine algebraic subgroup $G$ of the Cremona group $\Cn$
is rationally triangulable  in the Cremona group ${\mathcal C}_m$ for every
\begin{equation}\label{ineq}
m\geqslant 2n-{\rm tr\,deg}_k k(\bAn)^G.
\end{equation}
\end{theorem}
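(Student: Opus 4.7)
The plan is to verify the hypothesis of Theorem~\ref{cr} for $G$ viewed as a subgroup of $\mathcal{C}_m$: it suffices to show that $k(\bAm)^G$ is purely transcendental over $k$. Put $d := \operatorname{tr.deg}_k k(\bAn)^G$ and $F := k(\bAn)^G$. Iterating the embedding~\eqref{embC} shows that $G$ fixes $x_{n+1}, \ldots, x_m$ pointwise, so $k(\bAm) = k(\bAn)(x_{n+1}, \ldots, x_m)$; a routine argument with reduced-fraction representatives in the $G$-fixed variables $x_{n+1}, \ldots, x_m$ then gives
\[
k(\bAm)^G \;=\; F(x_{n+1}, \ldots, x_m).
\]
Since $\operatorname{tr.deg}_k F = d$, the remaining task is to show this field is purely transcendental over $k$ whenever $m - n \geq n - d$.

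The key intermediate claim I would establish is that $k(\bAn)/F$ is itself purely transcendental, of transcendence degree $n-d$. The natural route combines Rosenlicht's theorem with the rationality of the generic $G$-orbit: a rational quotient $\pi\colon \bAn \to Y$ with $k(Y) = F$ exists, and since a connected solvable affine algebraic group and each of its homogeneous spaces is a rational variety, the generic fiber of $\pi$ should be rational over $k(Y)$. I expect the birational structure theorems of Section~\ref{constr} to package this rationality cleanly. Granting it, pick $z_1, \ldots, z_{n-d} \in k(\bAn)$ algebraically independent over $F$ with $k(\bAn) = F(z_1, \ldots, z_{n-d})$.

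The $F$-algebra substitution $t_i \mapsto z_i$ is then an isomorphism
\[
F(t_1, \ldots, t_{n-d}) \;\xrightarrow{\ \sim\ }\; F(z_1, \ldots, z_{n-d}) \;=\; k(x_1, \ldots, x_n),
\]
so $F(t_1, \ldots, t_{n-d})$ is purely transcendental over $k$. Since $m - n \geq n - d$, adjoining the remaining indeterminates $x_{2n-d+1}, \ldots, x_m$ preserves pure transcendentality, and therefore so does $k(\bAm)^G$; Theorem~\ref{cr} then yields rational triangulability of $G$ inside $\mathcal{C}_m$.

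The main obstacle is the rationality claim for $k(\bAn)/F$, equivalently the trivialization of the generic fiber of $\pi$ as a form of a $G$-homogeneous space. For connected solvable $G$ this Galois-cohomological question reduces, via devissage along a composition series into $\mathbb{G}_m$- and $\mathbb{G}_a$-factors, to Hilbert 90 and its additive analogue; in positive characteristic, inseparability at the generic stabilizer calls for extra care, and here the invariant-subfield Theorems~\ref{ext1}--\ref{ext1g} established earlier in the paper should be the natural vehicle for a characteristic-free argument.
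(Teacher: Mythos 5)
Your proposal is correct and is essentially the paper's own argument, recast in field-theoretic rather than birational-geometric language: the paper likewise reduces to Theorem \ref{cr}, showing that $k(\bAm)^G$ is purely transcendental over $k$ by using $m-n\geqslant n-{\rm tr\,deg}_k\,k(\bAn)^G$ to absorb the splitting of $k(\bAn)$ over $k(\bAn)^G$ into the fresh variables (the chain \eqref{simsim}). The one ``main obstacle'' you flag --- that $k(\bAn)$ is purely transcendental over $F=k(\bAn)^G$ of degree $n-{\rm tr\,deg}_k F$ --- is no obstacle in the paper, which quotes it directly as \eqref{x} from \cite[Thm.\,1]{P15} (or \cite[Thm.\,1]{Ma63} in characteristic $0$), and, as you anticipated, it also follows from Theorem \ref{birspl} combined with Weil regularization; only your fallback suggestion that Theorems \ref{ext1}--\ref{ext1g} could furnish it is off the mark, since those theorems compare the invariant subfields $P^G$ and $Q^G$ and say nothing about the structure of $P$ over $P^G$.
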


Theorem \ref{st} generalizes
  \cite[Thm.\,3.1]{DF91}, where the statement is proved
  for one-dimensional unipotent algebraic subgroups of  ${\rm Aut}_k \bAn$, assuming ${\rm char}\,k = 0$.

Next we obtain a general construction of all rationally triangulable subgroups of $\,{\mathcal C}_n$, see
Theorem \ref{struct} in Section 3.\;As an application, it leads to the
 classification (given below in Corollary \ref{1sbgr}) of rationally triangulable  one-dimensional  connected uni\-potent algebraic subgroups in $\,{\mathcal C}_n$ up to conjugacy.\;In this classification we use the following terminology.\;

 A one-dimen\-si\-o\-nal
 connected unipotent algebraic subgroup $G$ of $\,{\mathcal C}_n$, identified with $k^+$ by means of an isomorphism $G\to k^+$,
is called {\it standard} if $x_1,\ldots, x_{n-1}\in k[\bAn]^G$ and, for every $t\in k^+$, the following holds:
\begin{enumerate}[\hskip 2mm\rm(i)]
\item for ${\rm char}\,k=0$, we have
$$t(x_n)=x_n+t$$ (in this case $G$ is also called the {\it translation});
    \item for ${\rm char}\,k=p>0$, we have
\begin{equation*}
t(x_n)=f_n+c_1t^{p^{i_1}}+\cdots +
    c_dt^{p^{i_d}},
\end{equation*}
where  all $c_j$'s are the nonzero elements of
$k(x_1,\ldots, x_{n-1})$,
and
$i_1<\cdots<i_d$ are the nonnegative integers.
\end{enumerate}

\begin{corollary}[\hskip -.8mm One-dimensional rationally triangulable unipotent subgroups]\label{1sbgr}
A one-dimen\-si\-o\-nal  connected unipotent algebraic subgroup of $\,{\mathcal C}_n$ is rationally triangulable if and only if it is conjugate in $\,{\mathcal C}_n$ to a standard subgroup.
\end{corollary}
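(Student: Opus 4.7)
The ``if'' direction is immediate. For a standard subgroup $G_0$, the flag $k\subset k(x_1)\subset\cdots\subset k(x_1,\ldots,x_n)=k(\bAn)$ consists of $G_0$-stable subfields (since $x_1,\ldots,x_{n-1}\in k[\bAn]^{G_0}$ by definition) with each step a $1$-extension. Moreover, rational triangulability is preserved by conjugation in ${\mathcal C}_n$: if $\psi\in{\mathcal C}_n$ conjugates $G_0$ to $G$, then $\psi$-images of $G_0$-stable subfields are $G$-stable, and $1$-extensions are preserved by the field isomorphism $\psi$. Hence any subgroup conjugate to a standard one is rationally triangulable.

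For the converse, I would assume $G\cong k^+$ is rationally triangulable. As $G$ acts nontrivially on $k(\bAn)$, one has ${\rm tr\,deg}_k k(\bAn)^G=n-1$, and Theorem \ref{cr} gives $k(\bAn)^G=k(y_1,\ldots,y_{n-1})$ for some $y_i\in k(\bAn)$. The central technical step is to show that $k(\bAn)$ itself is a $1$-extension of $k(\bAn)^G$. I plan to argue this via Rosenlicht's theorem on geometric quotients: choose a $G$-stable dense open $U\subset\bAn$ such that $U\to U/G$ is a principal $G$-bundle, so that $k(U/G)=k(\bAn)^G$. Its generic fiber is a $G$-torsor over ${\rm Spec}\,k(\bAn)^G$, trivial because $G\cong\mathbb{G}_a$ is smooth and $H^1(F,\mathbb{G}_a)=0$ for every field $F$ (additive Hilbert $90$, valid in all characteristics). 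Hence the generic fiber is $\mathbb{G}_{a,k(\bAn)^G}$, yielding $k(\bAn)=k(\bAn)^G(t)$ for some $t$.

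Next I would normalize the $G$-action on $t$ and transfer the result to the standard coordinates. Since $G$ fixes $k(\bAn)^G$ pointwise, its action on $t$ factors through ${\rm PGL}_2(k(\bAn)^G)$, the birational automorphism group of the affine line over $k(\bAn)^G$; the image is a one-dimensional connected unipotent subgroup and hence ${\rm PGL}_2(k(\bAn)^G)$-conjugate to the translation subgroup. After a suitable M\"obius replacement of $t$ over $k(\bAn)^G$ we have $g\cdot t=t+\phi(g)$, where $\phi\colon G\to\mathbb{G}_{a,k(\bAn)^G}$ is a nonzero homomorphism of algebraic groups, i.e., a nonzero additive polynomial over $k(\bAn)^G$: in characteristic $0$, $\phi(g)=cg$ with $c\in k(\bAn)^G\setminus\{0\}$, and rescaling $t$ by $c^{-1}$ gives $\phi(g)=g$; in characteristic $p>0$, $\phi(g)=c_1g^{p^{i_1}}+\cdots+c_dg^{p^{i_d}}$ with $c_j\in k(\bAn)^G\setminus\{0\}$ and $i_1<\cdots<i_d$ after dropping vanishing terms. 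Since $y_1,\ldots,y_{n-1},t$ is a transcendence basis of $k(\bAn)$ over $k$ that generates $k(\bAn)$ as a field, the $k$-algebra automorphism $\psi\in{\mathcal C}_n$ determined by $\psi(y_i)=x_i$ for $i<n$ and $\psi(t)=x_n$ is well-defined, and a direct computation shows that $\psi G\psi^{-1}$ fixes $x_1,\ldots,x_{n-1}$ and acts on $x_n$ by $x_n\mapsto x_n+\phi(g)|_{y_j\mapsto x_j}$, which is exactly the standard form.

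The principal obstacle is the pure-transcendence assertion $k(\bAn)=k(\bAn)^G(t)$; it combines Rosenlicht's quotient theorem with the triviality of $\mathbb{G}_a$-torsors, and the characteristic-$p$ case goes through thanks to the smoothness of $\mathbb{G}_a$.
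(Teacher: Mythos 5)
Your overall strategy is genuinely different from the paper's. The paper deduces the corollary from its Structure Theorem \ref{struct}, i.e.\ from Construction $\circledast$ (whose universality rests on Theorem \ref{birspl} $=$ \cite[Thm.\,3]{P15} plus Weil regularization) together with Example \ref{exa}, where the fibrewise homomorphisms $\varphi_z\colon k^+\to{\rm Aut}\,{\bf A}^1$ are classified \emph{over the algebraically closed field} $k$. You instead reprove the needed splitting $k(\bAn)=k(\bAn)^G(t)$ directly at the generic fibre and then normalize the action inside ${\rm PGL}_2$ \emph{over the invariant field} $K:=k(\bAn)^G$. In characteristic $0$ your argument is complete and correct, and it is a legitimate self-contained alternative (it avoids \cite[Thm.\,3]{P15} altogether). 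In characteristic $p$, however --- which is precisely the content of the corollary beyond \cite[Thm.\,2.2]{DF91} --- both of your key steps have genuine gaps, and they occur exactly where working over the non-perfect field $K$ costs something.

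First, Rosenlicht's theorem gives a geometric quotient, not a principal bundle, and the generic fibre is a $G$-torsor only when the action is \emph{scheme-theoretically} free; for faithful $k^+$-actions in characteristic $p$ the generic stabilizer can be a nontrivial infinitesimal subgroup scheme. This happens for the very subgroups you are classifying: for the standard subgroup with $t(x_n)=x_n+x_1t^{p}$ the stabilizer of a generic point is $\alpha_p$, so the fibre is a torsor under $\mathbb{G}_a/\alpha_p$, not under $G$; ``smoothness of $\mathbb{G}_a$'' is irrelevant here and does not repair the claim. The step is fixable --- the generic stabilizer $H$ is a finite $K$-subgroup scheme of $\mathbb{G}_{a,K}$, every such $H$ is the kernel of an additive polynomial, hence $\mathbb{G}_{a,K}/H\simeq\mathbb{G}_{a,K}$ and $H^1(K,\mathbb{G}_a)=0$ still applies --- or one can simply invoke \cite[Thm.\,1]{P15} (resp.\ \cite[Thm.\,1]{Ma63} in characteristic $0$), which is what the paper itself does. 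Second, $K$ is not perfect, and ${\rm PGL}_{2,K}$ then contains nonsplit (wound) forms of $\mathbb{G}_a$ that are \emph{not} conjugate to the translation subgroup: in characteristic $2$, for instance, the classes of the matrices $\bigl(\begin{smallmatrix} d & ae\\ e & d\end{smallmatrix}\bigr)$ with $a\in K\setminus K^2$ form a one-dimensional smooth connected unipotent $K$-subgroup whose underlying curve is ${\bf P}^1_K$ minus a degree-$2$ purely inseparable point, so it is not even abstractly isomorphic to $\mathbb{G}_{a,K}$. Hence ``one-dimensional connected unipotent, therefore conjugate to translations'' is false as stated; you must first observe that your subgroup is the image of $\mathbb{G}_{a,K}$ under a $K$-homomorphism, hence split, and then apply Borel--Tits (a split smooth connected unipotent $K$-subgroup of a reductive group lies in the unipotent radical of a $K$-parabolic) or a fixed-point argument using splitness. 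Both gaps are reparable, but as written the characteristic-$p$ case is not proved; the paper sidesteps both issues by doing the classification over $k$ in Example \ref{exa}, where unipotence immediately forces $\varphi_z(k^+)$ into the translation subgroup of ${\rm Aut}\,{\bf A}^1$ and \cite[Lemma\;3.3.5]{Spr} yields the additive-polynomial form.
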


Corollary \ref{1sbgr} generalizes
 \cite[Thm.\,2.2]{DF91}, where the claim is proved for ${\rm char}\,k=0$.



\begin{corollary}[{{\rm Low-dimensional affine spaces}}]\label{23} Let $U$ be a one-dimen\-si\-o\-nal  connected unipotent algebraic subgroup of $\,{\mathcal C}_n$.
\begin{enumerate}[\hskip 4.2mm\rm(i)]
 \item If $n=2$, then $U$ is conjugate in $\,{\mathcal C}_2$ to a standard subgroup.
\item If ${\rm char}\, k=0$ and $n=3$, then $U$ is  conjugate in $\,{\mathcal C}_3$ to the translation.
\end{enumerate}
\end{corollary}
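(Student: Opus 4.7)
The plan is to reduce both parts to Corollary \ref{1sbgr}, which characterizes rational triangulability of a one-dimensional connected unipotent subgroup of $\Cn$ as conjugacy to a standard subgroup; accordingly, the only thing to prove is that $U$ itself is rationally triangulable. The input for this will be Corollary \ref{cor1}, applied after determining ${\rm tr\,deg}_k k(\bAn)^U$.

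First I would compute this transcendence degree. Being one-dimensional, connected, and unipotent, $U$ is isomorphic to $k^+$, and its only positive-dimensional algebraic subgroup is $U$ itself. The embedding $U\hookrightarrow\Cn$ realizes $U$ as a faithful rational action on $\bAn$, so the generic stabilizer $U_x$ cannot contain all of $U$ (otherwise $U$ would fix a generic $x$ and hence, by density, act trivially). Thus $U_x$ is finite, the generic $U$-orbit on $\bAn$ has dimension $1$, and the standard transcendence-degree formula for a rational action yields
\[
{\rm tr\,deg}_k k(\bAn)^U = n-1.
\]

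For part (i), $n=2$ makes this transcendence degree equal to $1$, so Corollary \ref{cor1}(i) gives rational triangulability of $U$ and Corollary \ref{1sbgr} then conjugates $U$ in $\Cn$ to a standard subgroup. For part (ii), $n=3$ with ${\rm char}\,k=0$ makes the transcendence degree equal to $2$, and Corollary \ref{cor1}(ii), whose characteristic hypothesis is satisfied, again gives rational triangulability; Corollary \ref{1sbgr} then conjugates $U$ to a standard subgroup in $\mathcal{C}_3$. To finish (ii) I would simply unwind the definition of ``standard'' under the hypothesis ${\rm char}\,k=0$: it reduces to the single condition $t(x_3)=x_3+t$ with $x_1,x_2\in k[\Ab3]^U$, that is, the translation.

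The only substantive ingredient beyond the two previously established corollaries is the generic-orbit dimension count in the first step; this is routine for a faithful action of a one-dimensional connected algebraic group, and I do not foresee a genuine obstacle.
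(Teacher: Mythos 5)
Your proposal is correct and takes essentially the same route as the paper, whose proof is the one-line citation of Corollaries \ref{cor1}, \ref{cor2}, and \ref{1sbgr}: your explicit computation ${\rm tr\,deg}_k\,k(\bAn)^U=n-1$ (faithfulness forces one-dimensional generic orbits, then Weil regularization and Rosenlicht's quotient theorem give the formula) merely fills in the step the paper leaves implicit, and your unwinding of ``standard'' in characteristic zero as the translation is exactly the paper's definition. The only cosmetic difference is that for part (ii) you invoke Corollary \ref{cor1}(ii) directly rather than Corollary \ref{cor2}, which is if anything cleaner, since $U$ lies in ${\mathcal C}_3$ and need not lie in ${\rm Aut}\,\Ab3$.
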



By \cite{Re}, if ${\rm char}\,k=0$, $n=2$, and $U\subset {\rm Aut}\,{\mathbf A}\!^2$, then ``in $\,{\mathcal C}_2$'' in Corollary \ref{23}(i)
may be replaced by  ``in ${\rm Aut}\,{\mathbf A}\!^2$''.

By \cite{Ka2},  for $k=\mathbf C$, $n=3$, $U\subset {\rm Aut}\,{\mathbf A}\!^3$,
if $U$ acts on  ${\mathbf A}\!^3$
freely, then $U$ is conjugate in ${\rm Aut}\,{\mathbf A}\!^3$ to the translation.\;Corollary \ref{23}(ii) shows
that, allowing conjugation in ${\mathcal C}_3$, the ``if'' assumption in this result may be dropped, i.e., conjugacy to the translastion becomes true for every $U$ in ${\mathcal C}_3$.

The proofs of Theorems \ref{ext1}--\ref{st}
and Corollaries \ref{cor1}--\ref{corto}
are given in Sec\-tion\;2. Theorem \ref{struct} is formulated and proved, together with Corollaries \ref{1sbgr}, \ref{23},  in Section 3.


\vskip 2mm
\noindent{\it Notation and conventions.} We use freely the standard notation and conventions of
\cite{Bor}, \cite{Spr}, \cite{PV94} and  refer to \cite{Ro56}, \cite{Ro61}, \cite{Ro63},
\cite{PV94}, \cite{P13} regarding the definitions and basic properties of rational and regular (morphic) actions of algebraic groups.

 Unless otherwise specified, we will assume that every field
appearing below contains $k$ and every embedding of fields
restricts to the identity map on $k$.

\vskip 2mm

{\it Acknowledgements.} My thanks go to the referees for their comments.



\section{Proofs of Theorems \ref{ext1}--\ref{st}
and Corollaries \ref{cor1}, \ref{cor2}}

\begin{proof}[Proof of Theorem {\rm \ref{ext1}}] \


1. The assumptions on $Q$ and $P$ imply that there is an
irreducible variety
$X$ such that for
\begin{equation}\label{Y}
\begin{gathered}
Y:=\bA1\times X,\\[-.7mm]
\rho\colon Y\to X,\;\;(a, x)\mapsto x,
\end{gathered}
\end{equation}
we may (and shall) identify $P$ with $k(Y)$:
\begin{equation}\label{P}
P=k(Y),
\end{equation}
and $Q$ with the image of embedding $\rho^*\colon k(X)\hookrightarrow k(Y)$:
\begin{equation}\label{Q}
Q=\rho^*(k(X)).
\end{equation}

The actions of $G$ on $Q$ and $P$ determine the rational actions of $G$ on $X$ and $Y$.\;The  action on $Y$ is faithful and the morphism $\rho$ is equivariant with respect to these actions.

By Weil's regularization theorem \cite{We55} (see also \cite[Thm.\,1]{Ro56}) there are
\begin{enumerate}[\hskip 4.2mm\rm ---]
\item irreducible algebraic varieties $\widetilde X$ and $\widetilde Y$;
\item nonempty open subsets $X_0$ and $Y_0$ in, respectively, $X$ and $Y$;
\item open embeddings $
\iota_1\colon X_0\hookrightarrow \widetilde X$, $
\iota_2\colon Y_0\hookrightarrow \widetilde Y$
\end{enumerate}
such that the induced rational actions of $G$ on $\widetilde X$ and $\widetilde Y$ are regular (morphic).\;The action of $G$ on $\widetilde Y$ is faithful since that on $Y$ is.\;We identify $X_0$ and $Y_0$ with the images of, respectively,  $\iota_1$ and $\iota_2$, that yields the natural identifications
\begin{equation}\label{tilde}
k(X)=k(\widetilde X),\quad k(Y)=k(\widetilde Y).
\end{equation}

By Rosenlicht's theorem on generic quotients \cite[Thm.\,2]{Ro56}, replacing $\widetilde X$, $X_0$, $\widetilde Y$, and $Y$ by the appropriate open subsets, we may (and shall) assume that for the actions of $G$ on $\widetilde X$ and $\widetilde Y$ respectively there are the geometric quotients
\begin{equation*}
\pi^{\ }_{{\widetilde X}, G}\colon {\widetilde X} \to {\widetilde X}/G,\qquad \pi^{\ }_{{\widetilde Y}, G}\colon {\widetilde Y}\to {\widetilde Y}/G.
\end{equation*}
In particular, $\pi^{\ }_{{\widetilde X}, G}$ and $\pi^{\ }_{{\widetilde Y}, G}$  are equidimensional morphisms, their fibers are
$G$-orbits, and, in view of \eqref{P}, \eqref{Q}, \eqref{tilde},
\begin{equation}\label{isof}
\rho^*\circ\pi^{*}_{{\widetilde X}, G}\colon k({\widetilde X}/G)\xrightarrow{\simeq} Q^G,\qquad
\pi^{*}_{{\widetilde Y}, G}\colon k({\widetilde Y}/G)\xrightarrow{\simeq} P^G.
\end{equation}
 Since $\dim G=1$ and $G$ acts on $\widetilde Y$ faithfully, every fiber of $\pi^{\ }_{{\widetilde Y}, G}$
is one-di\-men\-sional; in view of \eqref{Y}, this yields
\begin{equation}\label{dimwY/G}
\dim{\widetilde Y}/G=\dim Y-1=\dim X.
\end{equation}

The morphism $\rho$ induces a $G$-equivariant dominant rational map
$$\widetilde \rho\colon \widetilde Y\dashrightarrow \widetilde X.$$
Since its domain of definition
is $G$-stable, replacing the varieties again by
 the appropriate open subsets
we may (and shall) 
assume that $\widetilde \rho$ is a $G$-equivariant surjective morphism.

Thus we obtain the following
commutative
diagram
\begin{equation}\label{dia}
\begin{matrix}
\xymatrix@C=8mm{
Y\ar[d]_{\rho}&\;Y_0\,\ar@{_{(}->}_{\hskip 1mm\iota_4}[l]\ar@{^{(}->}^{\hskip 2mm\iota_2}[r]\ar[d]^{\rho_0}&
\widetilde Y\ar[d]^{\widetilde \rho}\ar[r]^{\hskip -3mm\pi^{\ }_{{\widetilde Y}, G}}&{\widetilde Y}/G\\
X&\;X_0\,\ar@{_{(}->}_{\hskip 1mm\iota_3}[l]\ar@{^{(}->}^{\hskip 2mm\iota_1}[r]&
\widetilde X\ar[r]^{\hskip -2mm\pi^{\ }_{{\widetilde X}, G}}&{\widetilde X}/G
}
\end{matrix}\;\;\;,
\end{equation}
where $\rho_0:=\rho|_{Y_0}={\widetilde\rho}|_{Y_0}$ and $\iota_3,\iota_4$ are
the identical embeddings.\;Finally, replacing $X_0$ and $Y_0$ by the appropriate open subsets, we may (and shall) assume that
$\rho_0(Y_0)=X_0$.





Now we shall consider separately two arising possibilities.
\vskip 1mm

2. First, consider the case
\begin{equation}\label{QSQ}
Q^G=Q.
\end{equation}
By \eqref{Q}, condition \eqref{QSQ}
is equivalent to triviality of the action of $G$ on $\widetilde X$.

From $Q\subset P$ and \eqref{QSQ} we obtain
\begin{equation}\label{QPSP}
Q\subseteq P^G\subset P,
\end{equation}
From \eqref{P}, \eqref{tilde},  \eqref{dimwY/G} we infer ${\rm tr\,deg}_{P^G}P=1$,
and \eqref{Y}, \eqref{P}, \eqref{tilde} yield
${\rm tr\,deg}_{Q}P=1$.
Whence by
\eqref{QPSP} we obtain
${\rm tr\,deg}_{Q}P^G=0$.\;Since $P$ is a $1$-extension of $Q$,
 by L\"uroth's theorem
 (\cite{Lu}, see also, e.g.,  \cite[\S73]{vdW})
  the latter equality implies that $P^G=Q$.\;Thus, by \eqref{QSQ}, in the case under consideration
  we have $P^G=Q^G$.\;This proves claim (i) of Theorem \ref{ext1}.

\vskip 1mm

3. Now consider the
case
$Q^G\varsubsetneq Q$, i.e.,
$G$ acts on $\widetilde X$ nontrivially.
Every fiber of $\pi^{\ }_{{\widetilde X}, G}$ is then a one-dimensional
$G$-orbit; whence
\begin{equation}\label{dimwXS}
\dim {\widetilde X}/G=\dim X-1.
\end{equation}
In view of \eqref{P}, \eqref{Q}, \eqref{tilde},  \eqref{dimwY/G}, \eqref{dimwXS},
we have
\begin{equation}\label{trtr}
{\rm tr\,deg}_{Q^G}P^G=1.
\end{equation}

 Since $G$ is a connected solvable affine algebraic group,  by Rosenlicht's cross-section theorem \cite[Thm.\,10]{Ro56} there is a rational section
 \begin{equation*}
 \sigma\colon {\widetilde X}/G \dashrightarrow \widetilde X
 \end{equation*}
of $\pi^{\ }_{\widetilde X, G}$, i.e. a rational map such that
\begin{equation}\label{id}
\pi^{\ }_{\widetilde X, G}\circ\sigma={\rm id}_{\widetilde X}.
\end{equation}
Since $g\circ\sigma$ for every element $g\in G$ is also a rational section of $\pi^{\ }_{\widetilde X, G}$, we may (and shall) assume that
$\sigma({\widetilde X}/G) \cap X_0\neq \varnothing$.\;This implies that there is a locally closed irreducible subvariety $S\subset X_0$ such that
\begin{equation}
\label{oe}
\mbox{$\pi^{\ }_{\widetilde X, G}|_S^{\ }\colon S\to {\widetilde X}/G$\;
is an open embedding.}
\end{equation}
 In view of \eqref{isof}, this means that
\begin{equation}\label{isorest}
\mbox{$k(\widetilde X)^G\to k(S)$,\;\;$f\mapsto f|_S,$ is a well-defined isomorphsim.}
\end{equation}
In particular, in view of \eqref{dimwXS}, we have
\begin{equation}\label{===}
\dim S=\dim X-1.
\end{equation}

From
\eqref{Y} we obtain that
\begin{equation}\label{rS}
\rho^{-1}(S)=\bA1\times S.
\end{equation}
Consider in $\widetilde Y$ the locally closed irreducible subvariety
\begin{equation}\label{Z}
Z:=\rho^{-1}(S)\cap Y_0.
\end{equation}
From \eqref{rS} and \eqref{Z} we infer that
\begin{gather}
\mbox{$k(Z)$ is a $1$-extension of $K:=\rho|_Z^*(k(S))$,}\label{1ext}
\end{gather}
and from \eqref{dimwY/G}, \eqref{===} that
\begin{gather}
\dim Z=\dim X=\dim {\widetilde Y}/G.\label{dZ}
\end{gather}

We claim that the morphism
\begin{equation}\label{domi}
\zeta:=\pi^{\ }_{{\widetilde Y}, G}|^{\ }_Z\colon Z\to {\widetilde Y}/G
\end{equation}
is dominant.\;In view of \eqref{dZ}, to prove this, it suffices to show that,
for every point $z\in Z$, the fiber $\zeta^{-1}(\zeta(z))$ is finite.\;Assume the contrary.\;Since $\zeta^{-1}(\zeta(z))=Z\cap \mathscr O$, where $\mathscr O:=\pi^{-1}_{{\widetilde Y}, G}(\pi^{\ }_{{\widetilde Y}, G}(z))$ is a certain $G$-orbit,
we then infer
from $\dim \mathscr O=1$
that
\begin{equation}\label{ZO}
\dim (Z\cap \mathscr O)=1.
\end{equation}
Since
$\widetilde \rho$
is a $G$-equivariant morphism,
${\mathscr O}':={\widetilde \rho}(\mathscr O)$ is also
 a $G$-orbit.\;Hence $\dim {\mathscr O}'=1$, because all  $G$-orbits in $\widetilde X$ are one-dimensional.\;The latter equa\-li\-ty and \eqref{ZO} imply that
 ${\widetilde \rho}(Z\cap \mathscr O)$ is an open subset of  ${\mathscr O}'$; in particular,
it is infinite.\;On the other hand,  \eqref{Z} yields that ${\widetilde \rho}(Z\cap \mathscr O)$ lies in $S\cap {\mathscr O}'$. Since, by \eqref{oe},
 the latter is a single point, we obtain a contradiction.
 This
 proves the claim.

Thus, since $\zeta$ is dominant, it defines an embedding of fields
$\zeta^*\colon k(\widetilde Y/G)\hookrightarrow k(Z)$. In view of
\eqref{P}, \eqref{tilde}, this means that
\begin{equation}\label{isorest2}
\mbox{$\tau\colon P^G\hookrightarrow k(Z)$,\;\;$f\mapsto f|_Z,$ is a well-defined embedding,}
\end{equation}
and \eqref{Q}, \eqref{tilde}, \eqref{isorest}, \eqref{1ext} imply that
\begin{equation}\label{isorest3}
\mbox{$\tau\colon Q^G\hookrightarrow K$ is an isomorphism.}
\end{equation}

Thus \eqref{isorest2}, \eqref{isorest3} yield the following commutative diagram
\begin{equation*}
\begin{matrix}
\xymatrix{
k(Z)& \tau(P^G)\ar@{_{(}->}[l]_{\rm id}&K\ar@{_{(}->}[l]_{\rm id}\\
&P^G\ar[u]^{\tau}_{\simeq}&Q^G\ar@{_{(}->}[l]_{\rm id}\ar[u]^{\tau}_{\simeq}
}
\end{matrix}\;\;.
\end{equation*}
In view of \eqref{trtr}, \eqref{1ext}, this diagram and L\"uroth's theorem imply that
$P^G$ is a $1$-extension of $Q^G$.\;This completes the proof of claim (ii) of Theorem \ref{ext1}.
\end{proof}

\begin{proof}[Proof of Theorem {\rm \ref{ext1g}}]\

We argue by induction on $\dim G$.\;The statement being clear for trivial $G$, we need, assuming $\dim G>0$, to prove the inductive step.

Since $G$ is a connected solvable affine algebraic group, it contains a closed connected normal subgroup $N$
such that $\dim G/N=1$, see, e.g., \cite[p.\,6--03, Cor.\,1]{Gro}.\;The group $G/N$ naturally acts on $P^N$ and $Q^N$, and we have
\begin{equation}\label{S/N}
(P^N)^{G/N}=P^G\;\;\mbox{and}\;\;(Q^N)^{G/N}=Q^G.
\end{equation}
By the inductive assumption one of the following holds:
\begin{enumerate}[\hskip 4.2mm\rm(a)]
\item $P^N=Q^N$;
\item $P^N$ is a $1$-extension of $Q^N$.
\end{enumerate}

 If (a) is fulfilled, then $P^G=Q^G$ in view of \eqref{S/N}, i.e., (i) holds.

 If (b) is fulfilled, then, since $G/N$ is a one-dimensional connected solvable affine algebraic group,
 \eqref{S/N} and Theorem \ref{ext1} imply that either (i) or (ii) holds. 
This completes the proof. 
\end{proof}


\begin{proof}[Proof of Theorem {\rm \ref{cr}}]\

(i)$\Rightarrow$(ii) Assume that $k(\bAn)^G$ is a purely transcendental field extension of $k$.
Then there is a flag
\begin{equation}\label{I}
k(\bAn)^G=:I_t\supset I_{t-1}\supset\cdots\supset I_1\supset I_0:=k
\end{equation}
of the subfields of $k(\bAn)^G$ such that $I_i/I_{i-1}$ is a
$1$-extension for every $i>0$.

Since $G$ is a connected solvable affine algebraic group, there is a flag of its closed connected normal subgroups
\begin{equation}\label{sbgs}
G=:G_0\supset G_1\supset\cdots\supset G_{s-1}\supset G_s:=\{e\}
\end{equation}
such that $\dim G_{i-1}/G_{i}=1$ for every $i>0$, see \cite[p.\,6--03, Cor.\,1]{Gro}.
From \eqref{sbgs} we obtain the following flag of $G$-stable subfields of $k(\bAn)$:
\begin{equation}\label{KKK}
k(\bAn)=K_s\supseteq K_{s-1}\supseteq\cdots \supseteq K_1\supseteq K_0=k(\bAn)^G,\quad\mbox{$K_i:=k(\bAn)^{G_i}$}.
\end{equation}

By construction,
\begin{equation}\label{Ki}
K_{i-1}=k(\bAn)^{G_{i-1}}=(k(\bAn)^{G_i})^{G_{i-1}/G_i}=K_{i}^{G_{i-1}/G_i}\quad \mbox{for every $i>0$}.
 \end{equation}
 If the action of $G_{i-1}/G_i$ on $K_{i}$ is trivial, then \eqref{Ki} yields $K_i=K_{i-1}$.\;If it is nontri\-vi\-al, then, since $G_{i-1}/G_i$ is a one-dimensional connected solvable  affine algebraic group, we infer from \eqref{Ki} and  \cite[Thm.\;1]{P15}  (or \cite[Thm.\;1]{Ma63} if ${\rm char}\,k=0$)
that $K_i/K_{i-1}$ is a
$1$-extension.\;Therefore, once repetitions
in flag \eqref{KKK} are eliminated, we obtain a flag
\begin{equation}\label{L}
k(\bAn)=:L_d\supset L_{d-1}\supset\cdots\supset L_1\supset L_0:=k(\bAn)^G
\end{equation}
of $G$-stable subfields of $k(\bAn)$ such that $L_i/L_{i-1}$ is
a $1$-extension for every  $i>0$. From \eqref{L} and \eqref{I} we then obtain the flag
\begin{equation*}
k(\bAn)=:L_d\supset L_{d-1}\supset\cdots\supset L_1\supset L_0\!=\!I_t
\supset I_{t-1}\supset\cdots\supset I_1\supset I_0:=k
\end{equation*}
of $G$-stable subfields of $k(\bAn)$ whose ``levels'' are
$1$-extensions.\;By Definition \ref{triang} the group $G$ is then rationally triangulable.

\vskip 2mm

(ii)$\Rightarrow$(i) Conversely, assume that  the group $G$ is rationally triangulable and consider
a flag \eqref{K} of $G$-stable subfields of $k(\bAn)$.\;Passing to the $G$-invariant subfields, we then obtain the following flag of subfields of
$k(\bAn)^G$:
\begin{equation}\label{KG}
k(\bAn)^G=K_n^G\supseteq K_{n-1}^G\supseteq \cdots\supseteq K_1^G\supseteq K_0^G=k.
\end{equation}
By Corollary \ref{ext1g}, for every $i=1,\ldots, n$, the field
$K^G_i$ is a purely transcendental extension of $K_{i-1}^G$.\;Therefore,
\eqref{KG} yields that $k(\bAn)^G$ is a purely transcendental extension of $k$.\;This completes the proof.
\end{proof}



\begin{proof}[Proof of Corollary {\rm \ref{cor1}}]
In cases (i) and (ii), the claim follows, in view of Theorem \ref{cr},  from, respectively, L\"uroth's theorem and Castelnuovo's theorem (see, e.g., \cite[Chap.\,V, 6.2.1]{Ha}).\;In case (iii), it follows from (i) and (ii), since, by \cite[Thm.\,2]{Ro56},
${\rm tr\,deg}_kk(\bAn)^G=n-\max_{a\in \bAn}G\cdot a$.
\end{proof}



\begin{proof}[Proof of Corollary {\rm \ref{cor2}}] This follows from Corollary \ref{cor1}.
\end{proof}

\begin{proof}[Proof of Corollary {\rm \ref{corto}}] In view of Theorem \ref{cr}, the equivalence (i)$\Leftrightarrow$(ii) follows from Theorem 2.4 of \cite{P13}
(the assumption ${\rm char}\,k=0$ made in \cite{P13} is not used in the proof
of this theorem).\;The equivalence (ii)$\Leftrightarrow$(iii) follows from Corollary 4 of \cite{P13-2} (the assumption ${\rm char}\,k=0$ made in \cite{P13-2} is not used in the proof
of this corollary).
\end{proof}



\begin{proof}[Proof of Theorem {\rm \ref{Ns}}] By \cite{S-B04} (where it is assumed that ${\rm char}\,k\neq 2$)
there exists a nonrational threefold $X$ such that ${\mathbf A}\!^2
\times X$ is birationally isomorphic to ${\mathbf A}\!^5$.\;Hence we may (and shall)
\begin{equation}\label{biso}
\mbox{
 fix a birational isomorphism ${\mathbf A}\!^{n-3}\times X\dashrightarrow \bAn$.}
\end{equation}

Since the underlying variety of $G$ is rational (see \cite[p.\,5-02, Cor.]{Gr}), we also may (and shall)
fix a birational isomorphism between it and ${\mathbf A}\!^{n-3}$. We then obtain from the action of $G$ on itself by left translations a faithful rational action of $G$ on ${\mathbf A}\!^{n-3}$ such that
 \begin{equation}\label{k}
 k({\mathbf A}\!^{n-3})^G=k.
 \end{equation}
In turn,  the latter action yields a faithful rational action of $G$
on ${\mathbf A}\!^{n-3}\times X$ via the first factor.\;By
\cite[Cor.\,of\;Prop.\,II.6.6]{Bor} and \eqref{k}, for this action,
\begin{equation}\label{inva}
\mbox{$k({\mathbf A}\!^{n-3}\times X)^G$ and $k(X)$ are isomorphic.}
\end{equation}

Thus, given \eqref{biso}, we obtain a faithful rational action of $G$ on $\bAn$ such that
the field $k(\bAn)^G$ is  isomorphic to $k(X)$, and hence is not purely transcendental over $k$ according to the choice of $X$.

By
Theorem \ref{cr} we then conclude that the algebraic subgroup of $\mathcal C_n$ determined by this action is isomorphic to $G$ and rationally nontriangulable.
\end{proof}



\begin{proof}[Proof of Theorem {\rm \ref{st}}] First, we shall introduce some notation.\;Given a rational action of an algebraic group $H$ on an irreducible
algebraic variety $Z$, we denote by
$Z\dss H$ a rational quotient of this action, i.e.,
an irreducible variety (uniquely defined up to birational isomorphism) such that there exists an isomorphism
$k(Z\dss H)\to k(Z)^H$, restricting to the identity map on $k$.\;We shall write $X\approx Y$ if
$X$ and $Y$ are birationally isomorphic irreducible varieties.


By  \cite[Thm.\;1]{P15}  (or \cite[Thm.\;1]{Ma63} if ${\rm char}\,k=0$) we have
\begin{equation}\label{x}
\mbox{$\bAn\approx \bAn\dss G\times \bAs$,\;\;where $s:=n-{\rm tr\,deg}_kk(\bAn)^G$.}
\end{equation}
 Take an integer $m$ such that \eqref{ineq} holds and consider $G$ as a subgroup of $\;\mathcal C_m$  via embedding \eqref{embC}. The arising rational action of $G$ on
  $\bAm=\bAn\times {\mathbf A}\!^{m-n}$ is that through the first factor.\;Therefore
\begin{equation}\label{simsim}
\left.\begin{split}
\bAm\dss G&=(\bAn\dss G)\times {\mathbf A}\!^{m-n}\\
&\overset{\eqref{ineq}}{=
}(\bAn\dss G)\times\bAs\times {\mathbf A}\!^{m-n-s}\\
&\overset{\eqref{x}}{\approx} \bAn\times {\mathbf A}\!^{m-n-s}={\mathbf A}\!^{m-s}
\end{split}\right\}\;\;.
\end{equation}
By \eqref{simsim} the field $k(\bAm)^G$ is purely transcendental over $k$. Hence  by Theorem \ref{cr} the group $G$ is rationally triangulable in ${\mathcal C}_m$.
\end{proof}


\section{Construction
$\circledast$}\label{constr}

Now we shall  give
a general construction of all rationally triangulable subgroups of $\,{\mathcal C}_n$.\;It is prompted by the following result from
 \cite{P15}
 that yields a general
 construction of all connected solvable affine algebraic subgroups of $\,{\mathcal C}_n$:


 \begin{theorem}[{{\rm Standard invariant open subsets \cite[Thm.\,3]{P15}}}]\label{birspl} Let $X$ be an irreducible variety endowed with a re\-gu\-lar action of a connected solvable  affine algebraic group $S$.\;Then for the restriction of this action
on a certain $S$-stable nonempty open subset $\,Q$ of $X$ there exist
\begin{enumerate}[\hskip 2.2mm ---]
\item the geometric quotient $\pi^{\ }_{S, Q}\colon Q\to Q/S$;
\item an isomorphism  $\varphi\colon Q\to {\bf A}^{\hskip -.5mm r, u}\times (Q/S)$, where
$${\bf A}^{\hskip -.5mm r, u}:=\{(\alpha_1,\ldots, \alpha_{r+u})\in {\bf A}^{\hskip -.5mm r+u}\mid \alpha_i\neq 0 \;\mbox{for every $i\leqslant r$}\},\;\;r\geqslant 0,\;u\geqslant 0,$$
\end{enumerate}
such that the natural projection  ${\bf A}^{\hskip -.5mm r, u}\times (Q/S)\to Q/S$ is the geometric quotient of the regular action of $\;S$ on ${\bf A}^{\hskip -.5mm r, u}\times (Q/S)$ induced by $\varphi$.
\end{theorem}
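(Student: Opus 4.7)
The plan is to combine Rosenlicht's theorem on generic quotients with Rosenlicht's cross-section theorem (whose solvability hypothesis on $S$ is satisfied) and the structure theory of connected solvable affine algebraic groups, in order to trivialize the $S$-action generically and then identify the generic orbit with $\mathbf{A}^{r,u}$.

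First, I would apply Rosenlicht's theorem on generic quotients \cite[Thm.\,2]{Ro56} to obtain a nonempty $S$-stable open subset $Q_1\subset X$ for which there exists a geometric quotient $\pi\colon Q_1\to Q_1/S$ whose fibers are the $S$-orbits in $Q_1$; after further shrinking, all these orbits have the same dimension. Then I would invoke Rosenlicht's cross-section theorem \cite[Thm.\,10]{Ro56} to obtain a rational section $\sigma$ of $\pi$. Replacing $Q_1/S$ by the domain of definition of $\sigma$ and $Q_1$ by the $S$-saturation of the image of $\sigma$, I may assume that $\sigma\colon Q/S\to Q$ is a regular morphism on an $S$-stable open subset $Q\subseteq X$ with induced geometric quotient $\pi^{\ }_{S,Q}\colon Q\to Q/S$.

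Next I would study the orbit morphism
\begin{equation*}
\mu\colon S\times(Q/S)\longrightarrow Q,\qquad (s,y)\longmapsto s\cdot\sigma(y),
\end{equation*}
which is $S$-equivariant with respect to the left-translation action on the first factor. Combining upper semicontinuity of stabilizer dimension with the fact that for a regular action of a connected algebraic group on an irreducible variety the point stabilizers along a suitable nonempty open subset are mutually conjugate, I would, after further shrinking $Q/S$ and modifying $\sigma$ by a suitably chosen regular $S$-valued function so as to normalize the stabilizer, arrange that $S_{\sigma(y)}=H$ is independent of $y\in Q/S$ for a fixed closed subgroup $H\subseteq S$. Under this normalization $\mu$ descends to an $S$-equivariant morphism $\overline\mu\colon (S/H)\times(Q/S)\to Q$ that is bijective on closed points. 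Zariski's Main Theorem (applied after one more shrinking if necessary) then upgrades $\overline\mu$ to an isomorphism, and under this isomorphism the projection to the second factor is identified with $\pi^{\ }_{S,Q}$.

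Finally, I would identify the homogeneous space $S/H$ with some $\mathbf{A}^{r,u}$ as a variety carrying a transitive $S$-action. By the structure theory of connected solvable affine algebraic groups, $S$ admits a composition series $\{e\}=S_0\subset S_1\subset\cdots\subset S_N=S$ of closed connected normal subgroups whose successive quotients are isomorphic to $\mathbf{G}_m$ or $\mathbf{G}_a$; intersecting this series with $H$ and quotienting inductively expresses $S/H$ as an iterated extension in which each stage contributes a factor $\mathbf{G}_m$, $\mathbf{G}_a$, or nothing, so that $S/H$ is variety-isomorphic to $(\mathbf{G}_m)^r\times\mathbf{A}^u=\mathbf{A}^{r,u}$ for the appropriate $r,u\geqslant 0$. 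The hard part is the middle step: making the stabilizer \emph{literally} constant along $Q/S$ rather than merely of constant conjugacy class, and then deducing that the bijection $\overline\mu$ is an isomorphism of schemes; this is where the solvability of $S$ (through the existence of a rational cross-section that can be modified without losing regularity) does the real work.
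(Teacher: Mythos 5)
The paper contains no proof of this theorem: it is imported verbatim from \cite[Thm.\,3]{P15} (as the bracketed attribution indicates), so there is no in-paper argument to compare against, and your proposal must stand on its own. It does not: it has two genuine gaps, each fatal.

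The first gap is the stabilizer-normalization step. The assertion that the stabilizers of points in a suitable nonempty open subset are mutually conjugate is a theorem of Richardson for \emph{reductive} groups acting on smooth varieties in characteristic zero; for solvable groups it is false. Take $S=(k^+)^2$ acting on $X={\bf A}^2$ by $(a,b)\cdot(x,z)=(x,\,z+ax+b)$. The stabilizer of $(x,z)$ is $H_x=\{(a,-ax)\mid a\in k^+\}$, and since $S$ is abelian these subgroups are pairwise non-conjugate as $x$ varies; hence no modification of $\sigma$ by an $S$-valued function can make the stabilizer constant. Worse, the intermediate goal of your argument is unattainable in principle: in this example the quotient fibers $\{x\}\times{\bf A}^1$ are pairwise non-isomorphic \emph{as $S$-varieties} (an equivariant isomorphism $z\mapsto cz+d$ would force $c(ax+b)=ax'+b$ identically in $a,b$, whence $x=x'$), so $Q$ admits no $S$-equivariant isomorphism with $(S/H)\times(Q/S)$ carrying the left-translation-times-trivial action. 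You are in fact trying to prove a stronger statement than the theorem makes: the theorem only requires an isomorphism of \emph{varieties} $\varphi$ under which the orbits become the fibers ${\bf A}^{r,u}\times\{y\}$, while the transported action on the fiber is allowed to depend on $y$ --- this is precisely why Construction $\circledast$ in Section \ref{constr} lets $\varphi(s,z)$ depend on $z$.

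The second gap is positive characteristic, which the paper explicitly allows. Even when the stabilizer is literally constant, bijectivity of $\overline\mu$ does not yield an isomorphism, and Zariski's Main Theorem cannot repair inseparability: for $S=k^+$ acting on ${\bf A}^1$ by $t\cdot x=x+t^p$, the reduced stabilizer of every point is trivial and $\overline\mu\colon S\to{\bf A}^1$ is $t\mapsto t^p$, a bijective morphism that is not an isomorphism (the scheme-theoretic stabilizer is infinitesimal, which is exactly what reduced-stabilizer bookkeeping misses). Your final step --- identifying a homogeneous space $S/H$ of a connected solvable affine group with some ${\bf A}^{r,u}$ --- is essentially correct and is Rosenlicht's result (cf.\;\cite{Ro61}), but it sits on top of the two collapsed steps. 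A correct proof must trivialize the orbit fibration $Q\to Q/S$ directly as a map of varieties (for instance, by descending along a flag of normal subgroups of $S$ with one-dimensional quotients and splitting, generically, the resulting tower of fibrations with fibers ${\bf A}^{0,1}$ or ${\bf A}^{1,0}$ one step at a time), rather than through the orbit map of a single cross-section.
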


Theorem \ref{birspl} leads to the following

\vskip 1mm
\noindent{\it Construction 
$\circledast$}

\vskip 1mm

Let $S$ be a connected solvable affine algebraic group and let $Z$ be an irreducible variety such that,
for some nonnegative integers $r$, $u$,
 \begin{equation}\label{rrrat}
 \mbox{the variety $Z\times {\bf A}^{\hskip -.5mm r, u}$ is rational.}
 \end{equation}
Consider a map
 \begin{equation*}
 \varphi\colon S\times Z\to{\rm Aut}\,  {\bf A}^{\hskip -.5mm r, u}
 \end{equation*}
that has the following properties:
 \begin{enumerate}[\hskip 2mm\rm(i)]
 \item $\varphi$ is an algebraic family \cite{R64}, \cite{P14}, i.e.,
 \begin{equation*}
 S\times Z\times {\bf A}^{\hskip -.5mm r, u}\to {\bf A}^{\hskip -.5mm r, u},\quad (s, z, a)\mapsto \varphi(s, z)(a),
 \end{equation*}
 is a morphism;
 \item for every point $z\in Z$, the algebraic family
 \begin{equation*}
 \varphi_z\colon
 S\to {\rm Aut}\,  {\bf A}^{\hskip -.5mm r, u},\quad s\mapsto \varphi(s, z),
 \end{equation*}
 is a homomorphism determining a transitive action of
 $S$ on ${\bf A}^{\hskip -.5mm r, u}$.
 \end{enumerate}

These data determine
the regular action of $S$ on $Z\times {\bf A}^{\hskip -.5mm r, u}$ by the formula
\begin{equation}\label{actio}
S\times (Z\times {\bf A}^{\hskip -.5mm r, u})\to Z\times {\bf A}^{\hskip -.5mm r, u}, \quad
\big(s, (z, a)\big)\mapsto \big(z, \varphi(s, z)(a)\big).
\end{equation}
By (ii),  the orbits of this action are  the fibers of the projection
\begin{equation*}\label{quoti}
\pi\colon Z\times {\bf A}^{\hskip -.5mm r, u}\to Z,\quad (z, a)\mapsto z,
\end{equation*}
and, more precisely, for every point $z\in Z$, the variety ${\bf A}^{\hskip -.5mm r, u}$ endowed with the $S$-action determined by $\varphi_z$ is $S$-isomorphic to
the  fiber $\pi^{-1}(z)$.\;By \cite[Prop.\,II.6.6]{Bor} this implies that
$k(Z\times {\bf A}^{\hskip -.5mm r, u})^S\simeq k(Z)$.

In
view of \eqref{rrrat}, for $n=\dim Z+r+u$,
 we may (and shall)
 fix a birational isomorphism
 \begin{equation*}
 \gamma\colon Z\times {\bf A}^{\hskip -.5mm r, u}\dashrightarrow \bAn.
 \end{equation*}
 Then $\gamma$ and action  \eqref{actio} determine
 a rational action of $S$ on $\bAn$.\;The image of the homomorphism $S\to \mathcal C_n$
 determined by this rational action is a connected solvable
affine algebraic subgroup $G$ of $\mathcal C_n$, and for this $G$ we have
 \begin{equation}\label{fie}
 k(\bAn)^G \simeq k(Z).
 \end{equation}
We say that $G$ is a {\it subgroup of $\,\mathcal C_n$ obtained by Construction
$\circledast$}.

\vskip 2mm

Theorem\;\ref{birspl}
(combined with Weil's regularization theorem \cite{We55})
implies that
this construction is universal, i.e., every connected solvable
affine algebraic subgroup  of $\,\mathcal C_n$ is obtained by Construction $\circledast$.



\begin{example}
[One-dimensional connected unipotent subgroups of ${\mathcal C}_n$]\label{exa}
 {\rm Let $G$ be the one-dimensional 
additive group $k^+$.\;In
view of (ii), we then have
$r=0$, $u=1$, i.e.,
$\varphi_z\colon G=k^+\to {\rm Aut}\,{\bf A}^1$ for every $z\in Z$.\;Since ${\rm Aut}\,{\bf A}^1=T\ltimes N$, where $T$ is a one-dimensional torus and $N\simeq k^+$ is the
subgroup consisting of all translations $
{\bf A}^1\to {\bf A}^1$, $a\mapsto a+t$, $t\in k^+$,
this means that every
$\varphi_z$ may be identified with a surjective homomorphism
$k^+\to k^+$.\;What happens next depends on ${\rm char}\,k$, see \cite[Lemma\;3.3.5]{Spr}.\;

Namely, a map $k^+\!\!\to k^+$ is
a homomorphism if and only if it
has the fol\-lo\-wing form:

\hskip 1mm (i) {\rm case ${\rm char}\,k=0$}:\;\;
$t\mapsto ct,$
where $c$ is a fixed element of $k$,

(ii) {\rm case ${\rm char}\,k=p>0$}:\;\;$ t\mapsto \alpha_1t^{p^{i_1}}+\cdots + \alpha_dt^{p^{i_d}},$
 where $\alpha_1,\ldots, \alpha_d$ are the nonzero elements of $k$ and $i_1,\ldots, i_d$ is an increasing sequence of nonnegative integers.

Since every one-dimensional connected  unipotent algebraic group is isomorphic to $k^+$ (see, e.g.,\;\cite[Thm.\;3.4.9]{Spr}),
this yields the following  ge\-ne\-ral method of constructing
connected one-dimensional unipotent algebraic subgroups of $\,{\mathcal C}_n$.

Take
an irreducible variety $Z$ such that $Z\times {\mathbf A}^1$ and $\bAn$ are birationally isomorphic.\;If ${\rm char}\,k=0$, fix
 a nonzero regular function $f\in k[Z]$.\;If  ${\rm char}\,k=p>0$, fix a sequence of nonnegative integers
 $i_1<\ldots < i_d$ and a sequence of nonzero regular functions $f_1,\ldots,f_d\in k[Z]$.\;Consider
  the action of $S=k^+$ on $Z\times {\mathbf A}^1$ defined by the formula
 \begin{equation}\label{acttt}
 \begin{split}
 S\!\times\! (Z\!\times\! {\mathbf A}^1)&\to Z\!\times\! {\mathbf A}^1,\\ (s, (z, a))&\!\mapsto\!\begin{cases}\hskip -.5mm
 (z, a\!+\!f(z)s)&\mbox{if ${\rm char}\,k\!=\!0$},\\
 \hskip -.5mm (z, a\!+\!f_1(z)s^{p^{i_1}}\!+\!\cdots \!+\! f_d(z)s^{p^{i_d}})&\mbox{if ${\rm char}\,k\!=\!p\!>\!0$}.
 \end{cases}
 \end{split}
 \end{equation}
 Then  action \eqref{acttt} and a fixed birational isomorphism $\gamma\colon Z\times {\mathbf A}^1 \dashrightarrow\bAn$ determine a one-dimensional connected  unipotent algebraic subgroup $G$ of $\,{\mathcal C}_n$,
 and every such subgroup is obtained this way.
}
\end{example}

\begin{theorem}[{{\rm Structure theorem}}]\label{struct} The following properties of a connected solvable affine algebraic subgroup $G$ of the Cremona group
 $\,\mathcal C_n$ are equivalent:
 \begin{enumerate}[\hskip 2.2mm\rm(i)]
 \item $G$ is rationally triangulable;
 \item $G$ is obtained by Construction $\circledast$, in which the variety $Z$ is rational.
 \end{enumerate}
\end{theorem}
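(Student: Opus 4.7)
The proof will be a direct consequence of the triangulability criterion (Theorem \ref{cr}) together with the universality statement made just after Construction $\circledast$ (which follows from Theorem \ref{birspl} and Weil's regularization theorem). Both implications come from identifying the birational type of the variety $Z$ with the invariant field $k(\bAn)^G$ via \eqref{fie}.

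For the implication (ii)$\Rightarrow$(i): Suppose $G$ is obtained by Construction $\circledast$ with $Z$ rational. Then by \eqref{fie} we have $k(\bAn)^G\simeq k(Z)$, and since $Z$ is rational, this field is purely transcendental over $k$. Theorem \ref{cr} then gives that $G$ is rationally triangulable. This direction is essentially immediate.

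For the implication (i)$\Rightarrow$(ii): Suppose $G$ is rationally triangulable. By Theorem \ref{cr}, the field $k(\bAn)^G$ is purely transcendental over $k$. By the universality of Construction $\circledast$ recorded after its formulation (a consequence of Theorem \ref{birspl} applied to a regular model of the $G$-action on $\bAn$ obtained via Weil's regularization \cite{We55}), there exist data $(Z,r,u,\varphi,\gamma)$ realizing $G$ as a subgroup of $\mathcal{C}_n$ produced by Construction $\circledast$. Invoking \eqref{fie} once more, $k(Z)\simeq k(\bAn)^G$, so $k(Z)$ is purely transcendental over $k$, meaning $Z$ is rational.

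Neither direction presents a serious obstacle, as the real content sits in Theorem \ref{cr} and the universality statement for Construction $\circledast$, both already established. The only minor point of care is to observe that although the data $(Z,r,u,\varphi,\gamma)$ realizing a given $G$ are not unique, the birational class of $Z$ is determined by $G$ through \eqref{fie}, so the condition ``$Z$ is rational'' in (ii) is unambiguous and genuinely equivalent to purely-transcendental-ness of $k(\bAn)^G$.
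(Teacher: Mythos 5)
Your proof is correct and follows essentially the same route as the paper, whose entire proof reads ``This follows from \eqref{fie} and Theorem \ref{cr}'': you have simply made explicit the two implications and the appeal to the universality of Construction $\circledast$ (via Theorem \ref{birspl} and Weil's regularization) that the paper leaves implicit in the direction (i)$\Rightarrow$(ii). Your closing remark that the birational class of $Z$ is pinned down by $G$ through \eqref{fie} is a sensible clarification, though not strictly needed since statement (ii) is existential.
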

\begin{proof}
This follows from \eqref{fie} and Theorem \ref{cr}.
\end{proof}



\begin{proof}[Proof of Corollary {\rm \ref{1sbgr}}] Let $G$ be a rationally triangulable one-dimen\-si\-o\-nal connected unipotent  algebraic subgroup of $\,{\mathcal C}_n$.\;By Theorem \ref{struct} and Example \ref{exa}, $G$ is obtained from the action of $S=k^+$ on $Z\times  {\mathbf A}^1$ defined by formulas \eqref{acttt}, where $Z$ is  a rational variety.\;Therefore there are functions $f_1,\ldots, f_n\in k(\bAn)$ such that $k(f_1,\ldots, f_n)=k(\bAn)$, $f_1,\ldots, f_{n-1}\in k(\bAn)^G$, and $t(f_n)$ for every element $t\in S$ is the following function:
\begin{enumerate}[\hskip 4.2mm\rm(i)]
\item if ${\rm char}\,k=0$, then $t(f_n)=f_n+ct$, where $c\in k(f_1,\ldots, f_{n-1})$,
\item if ${\rm char}\,k=p>0$, then
$$
t(f_n)=f_n+b_1t^{p^{i_1}}+\cdots +b_dt^{p^{i_d}},
$$
where $b_j\in k(f_1,\ldots, f_{n-1})$, $b_j\neq 0$ for every $j$ and $i_1<\cdots <i_d$ are the nonzero integers.
\end{enumerate}

In case (i), replacing $f_n$ by
$f_n/c$ we may (and shall) assume that $c\!=\!1$. Then, conjugating $S$ by means of $\varphi\in {\mathcal C}_n$ such that
$\varphi(f_i)=x_i$ for every $i$ (see \eqref{xi}), we obtain a standard subgroup.

The converse (that standard subgroups are rationally triangulable) is clear.
\end{proof}

\begin{proof}[Proof of Corollary {\rm \ref{23}}] This follows from Corollaries \ref{cor1}, \ref{cor2}, and \ref{1sbgr}.
\end{proof}

\end{document}